\documentclass[amscd,amssymb,verbatim,12pt]{amsart}

\usepackage[all]{xy}

\usepackage{pdfsync}
\usepackage{xcolor}
\usepackage{enumerate}
\usepackage{graphicx}
\usepackage{subfigure}
\newif\ifAMS
\IfFileExists{amssymb.sty}
  {\AMStrue\usepackage{amssymb}}
  {\usepackage{latexsym}}
  \usepackage{enumerate}
\theoremstyle{plain}
\newtheorem{Main}{Main Theorem}

\newtheorem{Thm}{Theorem}[section]
\newtheorem*{Main-nest}{Theorem \ref{T:nesting}}

\newtheorem{Cor}[Thm]{Corollary}
\newtheorem{Lem}[Thm]{Lemma}

\theoremstyle{definition}
\newtheorem{Def}[Thm]{Definition}

\theoremstyle{remark}

\newtheorem{Rem}{Remark}

\newtheorem{Ex}[Thm]{Example}
\newtheorem{No}{Notation}

\newcommand{\interior}{^{ \kern-5pt ^\circ}}
\newcommand {\bd}{\partial}

\newcommand {\iy}{\infty}

\newcommand {\N}{{\mathbb N}}
\newcommand {\R}{{\mathbb R}}
\newcommand {\Z}{{\mathbb Z}}

\newcommand{\Int}{\text{Int}\,}
\newcommand{\Ext}{\text{Ext}\,}

\newcommand {\cA}{ {\mathcal  A}}
\newcommand {\cB}{ {\mathcal  B}}

\newcommand {\cP}{{\mathcal  P}}
\newcommand {\cR}{{\mathcal  R}}
\newcommand {\cS}{{\mathcal  S}}

\newcounter{notes}
\newenvironment{Notes}
{\begin{list}{\textsc{Case} \Roman{notes}:}
		{\setlength\labelsep{10pt}
		\setlength \itemindent{10pt}
		\setlength \leftmargin{0pt}
		\setlength \labelwidth{0pt}
		\setlength \itemsep{3pt}
		\setlength \listparindent{12pt}
		\setlength \topsep{3pt}
		\usecounter{notes}}}
{\end{list}}	
\newcounter{subcase}
\newenvironment{Subcase}
{\begin{list}{\textsc{Subcase} \roman{subcase}:}
		{\setlength\labelsep{10pt}
		\setlength \itemindent{10pt}
		\setlength \leftmargin{0pt}
		\setlength \labelwidth{0pt}
		\setlength \itemsep{3pt}
		\setlength \topsep{3pt}
		\setlength \listparindent{12pt}
		\usecounter{subcase}}}
{\end{list}}			
\begin{document}

\title{From cuts to 
$\R$ trees}

\author
{Eric Swenson }

\subjclass{54F15 ,54F05,}

\address
[Eric Swenson] {Mathematics Department, Brigham Young University,
Provo UT 84602}
\email [Eric Swenson]{eric@math.byu.edu}

%

\begin{abstract} 
We provide sharp conditions under which a collection of separators $\cA$ of a connected topological space $Z$  leads to a canonical $\R$-tree $T$.  Any group acting on $Z$ by homeomorphisms will act  by homeomorphisms on $T$. 
\end{abstract}
\thanks{}
\maketitle

\section{Introduction}
The connection between separation and pretrees/trees has been studied extensively.  Whyburn  (\cite{WHY}) showed that  the cut points in  Peano continuum induce the structure of a dendrite. Ward (\cite{W}) saw that these cut points gave an axiomatic structure that we today refer to as a pretree.  Bowditch (\cite{BOW},\cite{BOW1} ,\cite{BOW5}) showed how to use this cut point pretree to go from an action on a continuum to an action on an $\R$-tree provide the action was a convergence action, and used this action on a tree to prove significant results about the boundaries of hyperbolic groups.

  The  Papasoglu and the author have shown how to go from  a metric continuum $Z$ with finite cuts to a canonical  ``cactus" $\R$ tree $T$ (
  \cite{SWE},\cite{SWE1}, \cite{PS}, \cite{PS2}).  Any action by homeomorphisms on $Z$ induces an action by homeomorphisms on $T$.  Using this we showed that the only minimal finite cuts that occur in CAT(0) boundaries are, as expected, cut pairs arising from the group in question virtually splitting over a virtually cyclic group   (\cite{PS1}, \cite{PS3}).
  
  A cut of a connected topological space $Z$ will be a closed set $A$ such that $Z\setminus A$ is not connected.  
  In this paper, the author will show how to go from a connected topological space $Z$ together with a collection of cuts (satisfying certain sharp axioms) to a pretree $\cP$.  Provide the set of cuts is in some sense ``locally countable", $\cP$ will canonically embed in an $\R$ tree.  The main difficultly is that we are no longer restricting the cuts to be finite subsets.  This complicates things significantly.
  
    A cut $A$ of $Z$ is minimal if for any $B \subsetneq A$, $Z \setminus B$ is connected.  One can always find minimal finite cuts, but in the setting of this paper minimal cuts simply will not exist.  Consider the squares $S_n$ of side length $\frac 1n$ for each $n \in \N$.  We glue an edge of $S_n$ to the unit interval $I=[0,1]$ in the obvious way, one edge of $S_n$ glued isometrically to the subinterval $[0,\frac 1 n]$.   This gives a Peano continuum $Z$ which embeds in $\R^3$.  Ever initial subinterval of $I$ is a cut of $Z$, but the point $0\in I$ is not a cut, so these cuts don't contain a minimal cut.
    
      This removes the main tool that we used in our previous work, namely that if $A$ is a minimal cut of a continuum $Z$ then there are subcontinuum $X$ and $Y$ with $Z = X \cup Y$ and $A = X \cap Y = \bd X= \bd Y$.
  
  As originally envisioned by the author, the setting of this theorem would be something along the lines of metric continuum, and the cuts would be closed nowhere dense sets.  He had grandiose visions of using $G_\delta$s and $F_\sigma$s and the Baire category theorem.  As one can see from the main theorem, none of this came to pass.  The author attempted to use these sorts of arguments and discovered that he couldn't find a way to make them work.  The only proofs that worked were proofs using only the ``technology" of the 3rd chapter of an undergraduate topology text.  Really.  The author is painfully aware that there is much more to topology than the idea of connected and of a separation, but nothing beyond that will be used in the paper you are currently reading.  Rational people would begin to question many of their life choices at this point.  Take this as the author's apology for that fact that although this result is vastly stronger than he could even have imagined (or even believed) when he started, the techniques employed are absurdly elementary to the point of being insulting to the reader.  

 \begin{Main} Let $Z$ be a connected topological space and $\cA$ a collection of closed subsets of $Z$ satisfying the following:
\begin{enumerate}
\item $\forall A \in \cA$,  $A$ separates $Z$.  
\item $\forall A,B \in \cA$, $A$ doesn't separate $B$.
\item For all distinct $A, B \in \cA$, $A \cap B$ doesn't separate $Z$.  
\end{enumerate}
Then there is a complete median pretree $\cP$ consisting of closed subsets $Z$ satisfying the following
\begin{enumerate}[(I)]
\item $\cA \subseteq \cP$ 
\item $Z = \cup \cP$ 
\item For $A, C \in \cP$ then $C$ doesn't separate $A$.
\item For $A,B,C \in \cP$ and  $A, B \not \subseteq C$, then 
\begin{enumerate}
\item if $C \in (A,B)$ then $C$ separates $A$ from $B$
\item if $C$ separates $A$ from $B$ then $D \in (A,B) $ for some $D \subseteq C$
\end{enumerate} 

\end{enumerate}
If $\cA \cap L$ is countable for each linearly ordered subset $L$ of  $\cP$, then $\cP$  is preseparable.
\end{Main}
It is known that a complete median pretree canonically embeds in an $\R$ tree \cite[Theorem 13]{PS1}. 

The author acknowledges that things like closed nowhere dense and the Baire category theorem may in fact be useful in establishing that $\cA$ satisfies this ``locally countable" condition and so that $\cP$ is preseparable, but is leaving this for later work and hopefully applications.  

We can use this result in the setting of the Cactus pretree \cite{PS2}.  We would take $\cA$ to be the set of all wheels and min cuts not contained in wheels.   The resulting pretree would be a subpretree of the Cactus pretree and the resulting $\R$ tree would be a subtree of the Cactus $\R$ tree (the Cactus $\R$ having some extra leaves).  

Papasoglu and the author intend to use this result to prove structure theorems about boundaries of groups and splitting theorems about nonpostively curved groups over higher rank  virtually Abelian subgroups.   

The author would like to acknowledge helpful conversation with Papasoglu, who in the end decided he had better things to do with his life than this.  

\section{ Separators to pretrees}
\subsection{Review of undergraduate topology}
\begin{Def}Let $Z$ be a topological space.  A separation of $Z$ is a pair $(U,V)$ of disjoint non-empty open sets of $Z$ such that $Z = U \cup V$.  The space $Z$ is connected if and only if it has no separation.   We define a  relation on $Z$ by two elements $x,y$ are equivalent if there is no separation $(U,V)$ of $Z$ with $ x\in U$ and $y\in V$.  This is easily seen to be an equivalence relation, and the equivalence classes are call the quasicomponents of $Z$.  It follows that any connected subset of $Z$ will be contained in a quasicomponent, also the quasicomponents are closed since their complement is a union of open sets, namely the other sets of the separations. 
\end{Def}
For $A,B\subseteq Z$, we say $A$ separates $B$ if we have $b,c \in B\setminus A$ with $b$ and $c$ in different quasicomponents of $Z\setminus A$.  For $A,B,C \subseteq Z$, we say $A$ separates $B$ from $C$ if there exists $b \in B\setminus A$ and $c \in C\setminus A$ with $b$ and $c$  in different quasicomponents of of $Z\setminus A$.  
\begin{No}  The symmetric difference of two sets $A, B$ will be denoted $A\triangle B= ( A\setminus B)\cup( B\setminus A)$, and the disjoint union of $A$ and $B$ will be denoted $A\sqcup B$ (and we will use this only when $A\cap B =\emptyset$).
In a topological space we will use $\Int$ for {\em interior} of a set,  $\Ext$ for the {\em exterior} of a set, and  $\bd $ for the boundary of  a set.  We remind the reader that for any topological space $Z$ and $A \subseteq Z$, $Z$ is the disjoint union of $\bd A$, $\Int A$, and $\Ext A$.  The $\Int A$ and $\Ext A$ are open sets and so $\bd A$ is a closed set. We define the closure of $A$ to be $\bar A = A \cup \bd A = (\Int A) \sqcup \bd A$ which is closed since its complement is the open set $\Ext A$.
\end{No}
\begin{Lem}\label{L:sep} Let $Z$ be a topological space and $E$ a closed set of $Z$.  If $F \subseteq Z$ with 
\begin{itemize}
\item $\bd F \subseteq E$ 
\item $F \not \subseteq E$ 
\item  $(Z \setminus F) \not \subseteq E$
\end{itemize}
then $E$ separates $Z$.
\end{Lem}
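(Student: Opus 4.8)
The plan is to produce an explicit separation of $Z\setminus E$ directly from $F$. The key observation is that $Z$ decomposes as the disjoint union $\Int F \sqcup \bd F \sqcup \Ext F$, with $\Int F$ and $\Ext F$ open, and that the hypothesis $\bd F \subseteq E$ means deleting $E$ from $Z$ also deletes all of $\bd F$. So I would set $U = \Int F \cap (Z \setminus E)$ and $V = \Ext F \cap (Z\setminus E)$: these are open in the subspace $Z\setminus E$, are disjoint, and satisfy $U \cup V = (Z\setminus \bd F)\cap (Z\setminus E) = Z\setminus E$, again because $\bd F\subseteq E$. Hence $(U,V)$ will be a separation of $Z\setminus E$ provided both pieces are nonempty.

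The nonemptiness is where the two remaining hypotheses do their work, and it is essentially the only place anything needs checking. To see $U\neq\emptyset$, I would take any $x\in F\setminus E$, available since $F\not\subseteq E$; as $x\notin E\supseteq \bd F$, the point $x$ avoids $\bd F$, so $x\in F=\Int F\sqcup \bd F$ forces $x\in \Int F$, whence $x\in U$. Symmetrically, taking $y\in (Z\setminus F)\setminus E$, available since $Z\setminus F\not\subseteq E$, the point $y$ is outside both $F$ and $\bd F$, so from $Z=\Int F\sqcup \bd F\sqcup \Ext F$ together with $\Int F\subseteq F$ it must land in $\Ext F$, whence $y\in V$.

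Putting this together, $(U,V)$ is a separation of $Z\setminus E$ with $x\in U$ and $y\in V$, so $x$ and $y$ belong to different quasicomponents of $Z\setminus E$; unwinding the definition, this is exactly the statement that $E$ separates $Z$. I do not anticipate a genuine obstacle here: the whole lemma reduces to the two one-line nonemptiness checks above, each consuming one hypothesis, with $\bd F\subseteq E$ ensuring that $U$ and $V$ together exhaust $Z\setminus E$. (Curiously, the closedness of $E$ does not seem to be needed for this particular step, though it holds in all the intended applications.)
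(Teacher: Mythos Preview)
Your proof is correct and is essentially identical to the paper's own argument: the paper also sets $U=(\Int F)\setminus E$ and $V=(\Ext F)\setminus E$, verifies nonemptiness from $F\not\subseteq E$ and $(Z\setminus F)\not\subseteq E$ respectively, and uses $\bd F\subseteq E$ to see that $U\cup V=Z\setminus E$. Your side remark is also apt: the closedness of $E$ plays no role in this lemma's proof.
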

\begin{proof}  Consider the open sets $U = (\Int F )\setminus E$ and $V = (\Ext F )\setminus E$.   Since $\bd F \subseteq E$ but $F \not \subseteq E$  then $U \neq \emptyset$.  Similarly since $\bd (Z\setminus F) = \bd F \subseteq E$ but $(Z \setminus F) \not \subseteq E$, then $V= (\Ext F) \setminus E =[ \Int (Z\setminus F)] \setminus E \neq \emptyset$.  

Notice that if $z \not \in E$ then $z \not \in \bd F$ so $z \in \Int F \sqcup \Ext F$.  It follows that $Z \setminus E= U \cup V$.  Since $\Int F \cap \Ext F = \emptyset$ then $U \cap V = \emptyset$, and so 
 $(U,V)$ form a separation of $Z \setminus E$.
\end{proof}
 
 \begin{Cor} Let $A$ be a closed subset of the topological space $Z$  and  $(U,V)$ a separation of  $Z \setminus A$.  Then the boundary of $U$ in $Z$,  $\bd U \subseteq A$.   If $C \subseteq A$ is closed and  $C$ doesn't separate $Z$ then $\bd U \not \subseteq C$.  
 \end{Cor}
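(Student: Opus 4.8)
The plan is to handle the two assertions separately, both by elementary point-set manipulations, with the second reducing immediately to Lemma \ref{L:sep}.

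For the inclusion $\bd U \subseteq A$, I would first observe that since $A$ is closed, $Z \setminus A$ is open in $Z$, so $U$ and $V$ are open in $Z$, not merely in $Z \setminus A$. Next I note that $V$, being an open set disjoint from $U$, is in fact disjoint from $\bar U$: a point of $V \cap \bar U$ would have $V$ as an open neighborhood meeting $U$, which is absurd. Hence $V \subseteq \Ext U$, while trivially $U \subseteq \Int U$. Since $Z \setminus A = U \sqcup V \subseteq (\Int U) \sqcup (\Ext U)$ and $Z = (\Int U) \sqcup \bd U \sqcup (\Ext U)$, every point outside $A$ avoids $\bd U$; that is, $\bd U \subseteq A$.

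For the second claim I would argue by contradiction: suppose $C \subseteq A$ is closed, $C$ does not separate $Z$, yet $\bd U \subseteq C$. I would then apply Lemma \ref{L:sep} with $E = C$ and $F = U$. The boundary hypothesis of the lemma is exactly $\bd F \subseteq E$. Since $U$ is a nonempty subset of $Z \setminus A$ and $C \subseteq A$, we have $U \cap C = \emptyset$, so $U \not\subseteq C$; and since $V$ is a nonempty subset of $Z \setminus A$ disjoint from $C$ with $V \subseteq Z \setminus U$, we have $(Z \setminus U) \not\subseteq C$. Lemma \ref{L:sep} then yields that $C$ separates $Z$, contradicting the hypothesis, so $\bd U \not\subseteq C$.

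There is essentially no serious obstacle here; the only points requiring a moment's care are remembering that a separation of $Z \setminus A$ consists of sets that are open in $Z$ (using that $A$ is closed), and that openness of $V$ upgrades its disjointness from $U$ to disjointness from $\bar U$. Everything else is bookkeeping with the partition $Z = \Int U \sqcup \bd U \sqcup \Ext U$ together with a direct appeal to the preceding lemma.
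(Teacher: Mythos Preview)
Your proof is correct and follows essentially the same route as the paper: both observe that $U,V$ are open in $Z$ (since $A$ is closed), deduce $U=\Int U$ and $V\subseteq\Ext U$ to get $\bd U\subseteq A$, and then derive the second claim by contradiction via a direct application of Lemma~\ref{L:sep} with $E=C$ and $F=U$. The only difference is that you spell out in slightly more detail why $V\subseteq\Ext U$ and why the hypotheses of Lemma~\ref{L:sep} hold.
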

 \begin{proof} Since $A$ is closed, $U$ and $V$ are open sets of $Z$.  Thus $U = \Int U$ and $V \subseteq \Ext U$. It follows that $\bd U \subseteq A$.  
 
 Suppose that $\bd U \subseteq C$.  Since $U $ is nonempty then $U \not \subseteq C$.  Since $V$ is nonempty, $Z \setminus U \not \subseteq C$.  
  It follows from Lemma \ref{L:sep} that $C$ separates $Z$, a contradiction.
 \end{proof}
\begin{Lem}\label{L:qinc} Let $Y\subseteq Z $ where $Z$ is a topological space.  The inclusion map $\iota: Y \hookrightarrow Z$ sends quasicomponents to quasicomponents.
\end{Lem}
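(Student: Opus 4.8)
The plan is to translate the statement into the assertion that the inclusion $\iota$ induces a well-defined map on the \emph{sets} of quasicomponents: every quasicomponent of $Y$ is contained in a (necessarily unique) quasicomponent of $Z$. Equivalently, if $x,y\in Y$ lie in the same quasicomponent of $Y$, then they lie in the same quasicomponent of $Z$. I would prove the contrapositive, since this is the natural direction: the quasicomponent relation is \emph{defined} in terms of the existence of a separation that splits the two points.

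So suppose $x,y\in Y$ lie in different quasicomponents of $Z$. By definition there is a separation $(U,V)$ of $Z$ with $x\in U$ and $y\in V$. Set $U'=U\cap Y$ and $V'=V\cap Y$. These are open in the subspace topology on $Y$; they are disjoint because $U\cap V=\emptyset$; they cover $Y$ because $U\cup V=Z$; and $x\in U'$, $y\in V'$ show both are nonempty. Hence $(U',V')$ is a separation of $Y$ placing $x$ and $y$ in distinct pieces, so $x$ and $y$ lie in different quasicomponents of $Y$. This is exactly the contrapositive, so the lemma follows.

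There is no real obstacle here; the only point to keep straight is what ``sends quasicomponents to quasicomponents'' should mean. It does not claim that a quasicomponent of $Y$ equals a quasicomponent of $Z$, nor that the induced map on quasicomponents is injective or surjective --- it is neither in general (for instance $Y$ may be totally disconnected while $Z$ is connected, and $Z$ may carry quasicomponents disjoint from $Y$). The content is precisely the containment above, and, as the proof shows, nothing about $Y$ beyond ``$Y\subseteq Z$'' is used: the same one-line restriction-of-a-separation argument would work verbatim for any continuous map in place of $\iota$.
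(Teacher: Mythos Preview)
Your proof is correct and is essentially the same argument as the paper's: both pull back a separation $(U,V)$ of $Z$ along $\iota$ to obtain $(\iota^{-1}(U),\iota^{-1}(V))=(U\cap Y,V\cap Y)$, which is either a separation of $Y$ or has one side empty, and conclude that points in the same quasicomponent of $Y$ remain so in $Z$. The only difference is presentation: you frame it explicitly as a contrapositive and spell out the interpretation of ``sends quasicomponents to quasicomponents,'' while the paper compresses everything into one sentence.
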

\begin{proof}$\iota$ is a continuous function so for any separation $(U,V)$ of $Z$,  $(\iota^{-1}(U), \iota^{-1}(V))$ is either a separation of $Y$, or one of 
$\iota^{-1}(U), \iota^{-1}(V)$ is empty.  The result follows.  
\end{proof}
\subsection{Cuts and Blobs}
Let $Z$ be a connected topological space  and $\cA$ a collection of closed subsets of $Z$ (called {\em cuts}) satisfying the following conditions:
\begin{enumerate}
\item For each $A \in \cA$, $A$ separates $Z$.
\item For any $A,B \in \cA$, $A$ doesn't separate $B$.
\item  For any $A,B \in \cA$ distinct, $A \cap B$ doesn't separate $Z$.
\end{enumerate}

We will show there is a canonical pretree encoding the separation properties of $\cA$.  
Conditions (2) and (3) are both individually necessary.  For condition (2) this is shown in Example \ref{Ex: corners}.  The necessity of condition (3) in 3-manifolds is literally the reason cube complexes were invented \cite{SAG}. 

 \begin{Def} A subset $P$ of $Z$ is called {\em inseparable} is there is no element of $\cA$ which separates $P$.  Clearly a nested union of inseparable set is inseparable, so by Zorin's Lemma every inseparable set is contained in a  maximal inseparable sets.  Notice that a maximal inseparable set will be closed because its complement is open (That is for any maximal inseparable $B$ of $Z$ and any $x \not \in B$ there is a cut $A \in \cA$ and a separation $(U,V)$ of $Z\setminus A$ with $x \in U$ and $U \cap B = \emptyset$.  Thus $Z \setminus B$ is open.)
 
Since cuts are inseparable,   every cut is contained in a maximal inseparable set and some cuts { \bf could be maximal inseparable sets}.  We define $\cB$ to be the maximal inseparable sets which are not   just a single cut.  We will call the elements of $\cB$ {\em blobs} and we define our pretree-to-be $\cP = \cA \sqcup \cB$. In particular $\cP$ contains every maximal inseparable set.
 \end{Def}
Every quasi component of $ Z \setminus(\cup \cA)$ is contained in a blob, but the blobs can contain more than one quasi component of $ Z \setminus(\cup \cA)$, to wit:
 \begin{Ex} Let $Z$ be the union of $\R^2$ with a half plane glued along the horizontal axis.  For $i \in \Z$, let $A_i$ be the union in $\R^2$ of the vertical ray $\{i\} \times[0,\iy)$ and  the horizontal ray $[i,\iy) \times \{0\}$ (both rays emanate from the point $(i,0)$). $\cA = \{A_i:i \in \Z\}$ satisfies our four conditions.  The open half plane  $ \R  \times(-\iy,0)$  and the glued on open half plane are different quasi components of $ Z \setminus(\cup \cA)$ but their union is contained in a single blob.    \end{Ex}
 \includegraphics[keepaspectratio, scale =1]{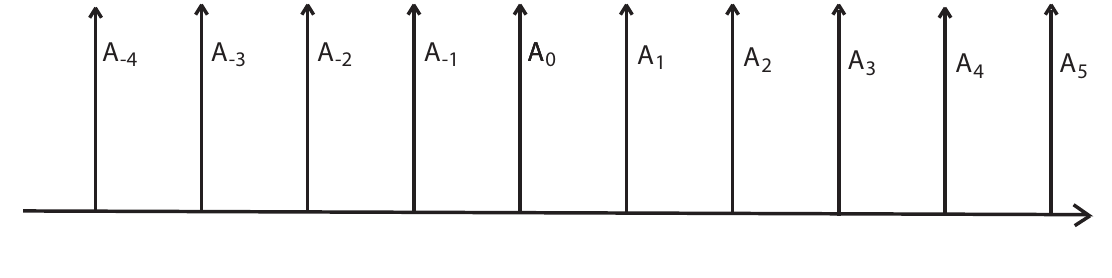}

 \begin{Rem}{\bf Instead} of the above definition of blob, we could do the following:  For $x, y \in Z \setminus(\cup \cA)$ we say $x \sim y$ if there is no $C \in \cA$ separating $x$ from $y$.  This is easily seen to be an equivalence relation on $ Z \setminus(\cup \cA)$.  Let $\cB$ be the set of equivalence class.  The proof of the pretree axioms in this case would follow mutatis mutandis from the treatment below. The pretree so obtained would be a subpretree of the one above; it would be missing exactly those blobs which are unions of cuts.
 \end{Rem}
 We define the betweenness relation on $\cP$ in stages. 
 \begin{Def} \label{D:int} For $B,C \in \cP$ and $A \in \cA$, we say that $A \in (B,C)$ if  $A$ separates $B$ from $C$.  
 For $B,C \in \cP$,  we define $[B,C]= (B,C)\cup \{B,C\}$ and similarly for half open intervals.
 \end{Def}
 We will need the following easy result.
 \begin{Lem}\label{L:quas} For distinct $A,B,C \in \cP$ with $A \in \cA$ and $A \in (B,C)$  then $B\setminus A$ is nonempty and in a quasicomponent of $Z\setminus A$, and $C\setminus A$ is also nonempty and in a different quasicomponent of $Z\setminus A$.  
 \end{Lem}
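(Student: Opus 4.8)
The plan is to reduce the whole statement to one elementary observation: every element of $\cP$ is inseparable, so cannot be separated by any cut. First I would recall why this holds. A blob is by definition a maximal inseparable set, hence inseparable; and a cut $B \in \cA$ is inseparable by axiom (2), which says precisely that no $A \in \cA$ separates $B$. So for any $D \in \cP$ and any $A \in \cA$, $A$ does not separate $D$. Unwinding the definition of ``separates'', this means that there do not exist $x,y \in D \setminus A$ lying in different quasicomponents of $Z \setminus A$; equivalently, whenever $D \setminus A$ is nonempty it is contained in a single quasicomponent of $Z \setminus A$ (namely the quasicomponent of any one of its points). Note that Lemma \ref{L:qinc} is not actually needed for this, since the definition of ``$A$ separates $D$'' already refers to quasicomponents of $Z\setminus A$.

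Next I would unpack the hypothesis $A \in (B,C)$. By Definition \ref{D:int} this means $A$ separates $B$ from $C$, so by the definition of ``separates $\cdot$ from $\cdot$'' there are points $b \in B \setminus A$ and $c \in C \setminus A$ lying in different quasicomponents of $Z \setminus A$. In particular $B \setminus A$ and $C \setminus A$ are both nonempty, which is half of what is claimed.

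Finally I would combine the two steps. By the first step, $B \setminus A$ is contained in the quasicomponent $Q_b$ of $Z \setminus A$ that contains $b$, and $C \setminus A$ is contained in the quasicomponent $Q_c$ that contains $c$. Since $b$ and $c$ were chosen in \emph{different} quasicomponents, $Q_b \neq Q_c$, so $B \setminus A$ and $C \setminus A$ sit in distinct quasicomponents of $Z \setminus A$, as desired. I do not expect a genuine obstacle here; the only points requiring a little care are to invoke axiom (2) for the case where $B$ or $C$ happens to lie in $\cA$ (rather than in $\cB$), and to observe that the distinctness of $A,B,C$ is used only to ensure the statement is not vacuous, not anywhere in the argument itself.
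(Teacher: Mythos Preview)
Your proof is correct and follows essentially the same approach as the paper's: inseparability of every element of $\cP$ (via axiom~(2) for cuts, by definition for blobs) together with the definition of $A\in(B,C)$ yields the result. The only cosmetic difference is that you extract nonemptiness of $B\setminus A$ and $C\setminus A$ directly from the hypothesis $A\in(B,C)$, whereas the paper cites condition~(3) on~$\cA$ and the definition of~$\cB$; both are valid, and your route is arguably more direct.
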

 \begin{proof} Nonempty follows from condition (3) on $\cA$ or the definition of $\cB$.
The other follows from definition of $(B,C)$ and from either  condition (2) on $\cA$ or the definition of $\cB$. \end{proof}
\begin{Lem}\label{L:blob} For $B,C $ distinct maximal inseparable sets, there is $A \in \cA$ with $A \in (B,C)$.  Furthermore $B\cap C \subseteq A$. 
\end{Lem}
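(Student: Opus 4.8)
The plan is to leverage maximality to produce a cut that separates $B\cup C$, then use inseparability of $B$ and of $C$ to promote it to a cut separating $B$ from $C$, and finally to read the intersection statement off the resulting picture.

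First I would observe that $C\not\subseteq B$ (and symmetrically $B\not\subseteq C$): if $C\subseteq B$, then $B$ is an inseparable set containing the maximal inseparable set $C$, forcing $B=C$, contrary to hypothesis. Hence $B\cup C\supsetneq B$, and since $B$ is a \emph{maximal} inseparable set, $B\cup C$ cannot be inseparable. So there is some $A\in\cA$ separating $B\cup C$; that is, there are points $x,y\in(B\cup C)\setminus A$ lying in distinct quasicomponents of $Z\setminus A$.

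Next, since $B$ and $C$ are inseparable, the cut $A$ separates neither of them, so $B\setminus A$ is contained in a single quasicomponent of $Z\setminus A$ and likewise $C\setminus A$ is contained in a single quasicomponent. As $x$ and $y$ lie in different quasicomponents and each lies in $(B\setminus A)\cup(C\setminus A)$, they cannot both lie in $B\setminus A$, nor both in $C\setminus A$; after relabelling we get $x\in B\setminus A$ and $y\in C\setminus A$. In particular $B\setminus A$ and $C\setminus A$ are nonempty and meet distinct quasicomponents of $Z\setminus A$, which is exactly the assertion $A\in(B,C)$. For the final claim, apply Lemma~\ref{L:quas} (or simply reuse the preceding observation): $B\setminus A$ sits in one quasicomponent and $C\setminus A$ in a different one. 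Were there a point $p\in(B\cap C)\setminus A$, then $p\in B\setminus A$ would place it in the first quasicomponent and $p\in C\setminus A$ in the second — impossible. Hence $B\cap C\subseteq A$.

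I do not expect a genuine obstacle here: the whole argument is a short dichotomy. The only point demanding a little care is the bookkeeping in the middle step when $x$ or $y$ happens to lie in $B\cap C$; but since we need only one of the two points to witness $B\setminus A$ and the other to witness $C\setminus A$, this is harmless, and in particular the superficially degenerate possibilities $B\subseteq A$ or $C\subseteq A$ require no separate treatment because the argument manufactures points of $B\setminus A$ and of $C\setminus A$ outright.
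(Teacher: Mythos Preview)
Your argument is correct and follows essentially the same route as the paper's proof: maximality forces $B\cup C$ to be separable, inseparability of $B$ and of $C$ upgrades the resulting cut to one in $(B,C)$, and the intersection statement drops out of the separation picture. You simply spell out in more detail the steps the paper leaves implicit (why $B\cup C\supsetneq B$, the relabelling of $x$ and $y$, and why the degenerate cases need no extra work).
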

\begin{proof} Since $B$ and $C$ are maximal inseparable, $B \cup C$ is separable.  Thus there is $A\in \cA$ separating two points of $B\cup C$, but since $B$ and $C$ are inseparable, $A \in (B,C)$.  
There is a separation $(U,V)$ of $Z\setminus A$ with $B \setminus A \subset U$ and $C \setminus A\subset V$.  This implies $\left(B \setminus A\right) \cap \left(C \setminus A\right) = \emptyset$ which implies $B \cap C \subseteq A$.
\end{proof}
 We now extend the betweenness relation.
 \begin{Def}  For distinct  $A, B,C \in \cP$ with  $B\in \cB$ we say that $B \in (A,C)$ if $[A,B)\cap (B,C] = \emptyset$, where the $[B,A)$ and $(A,C]$ are the  intervals defined in Definition \ref{D:int} . We extend to closed (and half open intervals) as before.
 \end{Def}  
 We must now show that $\cP$ is a pretree. The axioms of a pretree are the following:
 \begin{enumerate}
 \item $\forall B \in \cP, \,(B,B) = \emptyset$
 \item $\forall A,B \in \cP,\, (A,B) = (B,A)$
 \item $\forall\, B,C \in \cP$, if $A \in (B,C)$ then $B \not \in (A,C)$
 \item $\forall \, A,B,C \in \cP$, $(A,C) \subseteq (A,B] \cup [B,C)$
 \end{enumerate}
 The first two axioms are  satisfied by our definition of betweenness on $\cP$.  Axiom (3) will be verified  in Theorem \ref{T:axiom 3}  and Axiom (4) in Theorem \ref{T:axiom 4}.
 \begin{Ex}\label{Ex: corners}  Condition (3) on $\cA$ is new and we now give an example to show that condition (3)  is necessary.  
 
  \includegraphics[keepaspectratio, scale =1.5]{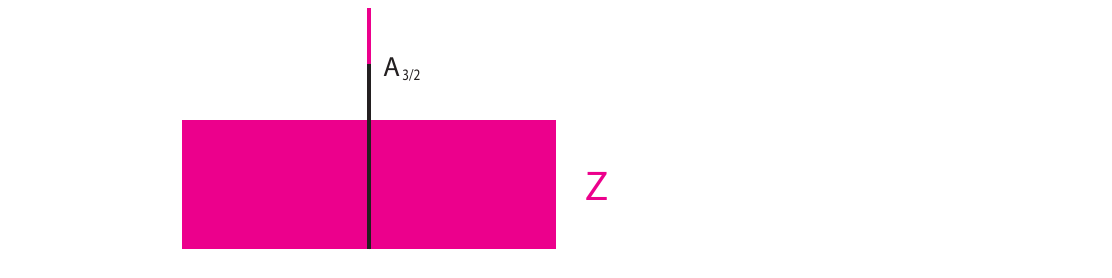}
 
 Consider the subset of the plane $Z = \left([-1,1]\times [0,1] \right)\cup\left(\{0\} \times[1,2]\right)$.  For $y \in [1,2]$ let $A_y = \{0\}\times [0,y]$ and set $$\cA = \{ A_y :\, y \in [1,2]\}.$$  Clearly $Z$ and $\cA$ satisfy conditions (1) and (2) but not condition (3).  There are exactly two blobs, $B= [-1,0] \times [0,1]$ and $C = [0,1] \times [0,1]$ and  $(B,C)=\cA  $.  However $(B, A_1] \cap [A_1, C) = \{A_1\}$ so 
 $(B,C) \not \subseteq  (B, A_1] \cap [A_1 \cap C)$ and thus pretree axiom (4) fails in this example.
 \end{Ex}
 \begin{Thm} \label{T:axiom 3} $\forall B,C \in \cP$, if $A \in (B,C)$ then $B \not \in (A,C)$.
 \end{Thm}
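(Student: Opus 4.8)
The plan is to split the argument according to whether each of $A$ and $B$ lies in $\cA$ or in $\cB$. We may assume $A,B,C$ are pairwise distinct, since otherwise the statement is immediate: if $B=C$ then $(B,C)=\emptyset$ and the hypothesis is vacuous; if $A=C$ then $(A,C)=\emptyset$ and the conclusion holds; and if $A=B$ the hypothesis would read $A\in(A,C)$, which never holds. Of the four remaining cases, three are bookkeeping with the staged definition of betweenness, and one --- the case $A,B\in\cA$ --- carries all the topological content.

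First I would clear the cases in which at least one of $A,B$ is a blob. If $A\in\cA$ and $B\in\cB$, then ``$B\notin(A,C)$'' means precisely ``$[A,B)\cap(B,C]\neq\emptyset$'', and $A$ itself lies in this intersection: $A\in[A,B)$, and $A\in(B,C]$ because $A\in(B,C)$ by hypothesis. If $A\in\cB$ and $B\in\cA$, then the hypothesis ``$A\in(B,C)$'' means ``$[B,A)\cap(A,C]=\emptyset$''; were $B$ to separate $A$ from $C$ we would get $B\in[B,A)\cap(A,C]$, a contradiction, so $B\notin(A,C)$. If $A,B\in\cB$, I would use Lemma \ref{L:blob} to choose a cut $E\in(A,B)$ (necessarily $E\neq A,B$ since $\cP=\cA\sqcup\cB$). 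If $C=E$ we are done, since then $E\in(A,B)\subseteq[A,B)$ and $C\in(B,C]$. Otherwise $C\setminus E\neq\emptyset$ (a short check using maximality of blobs and condition (3)), so $C\setminus E$ lies in a single quasicomponent of $Z\setminus E$, hence on one side of a separation $(U,V)$ of $Z\setminus E$ chosen with $A\setminus E\subseteq U$ and $B\setminus E\subseteq V$ (Lemma \ref{L:quas}, using that a quasicomponent cannot meet both sides of a separation). If $C\setminus E\subseteq V$ then $E$ separates $A$ from $C$, so $E\in[B,A)\cap(A,C]$, contradicting ``$A\in(B,C)$''; hence $C\setminus E\subseteq U$, so $E$ separates $B$ from $C$, giving $E\in[A,B)\cap(B,C]$ and therefore $B\notin(A,C)$.

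The main case is $A,B\in\cA$: here $A$ separates $B$ from $C$, and I must show $B$ does not separate $A$ from $C$; I would argue by contradiction. Using Lemma \ref{L:quas}, choose a separation $(U,V)$ of $Z\setminus A$ with $B\setminus A\subseteq U$, $C\setminus A\subseteq V$, and a separation $(U',V')$ of $Z\setminus B$ with $A\setminus B\subseteq U'$, $C\setminus B\subseteq V'$. Since $V$ is disjoint from $A$ while $B\setminus A\subseteq U$, one gets $V\cap B=\emptyset$, and symmetrically $V'\cap A=\emptyset$. Put $W:=V\cap V'$; it is open and nonempty, because $C\setminus A\subseteq V$ is disjoint from $B$, so $C\setminus A\subseteq C\setminus B\subseteq V'$ and hence $\emptyset\neq C\setminus A\subseteq W$. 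The crux is the claim $\bd W\subseteq A\cap B$: a point of $B\setminus A$ lies in the open set $U$, which is disjoint from $V\supseteq W$; a point of $A\setminus B$ lies in the open set $U'$, which is disjoint from $V'\supseteq W$; and a point outside $A\cup B$ lies in one of the open sets $U\cap U'$, $U\cap V'$, $V\cap U'$, $W$, each either disjoint from $W$ or equal to $W$; so no point of $Z\setminus(A\cap B)$ lies in $\bd W$. Finally $W$ is nonempty and disjoint from $A$, so $W\not\subseteq A\cap B$; and $B\setminus A$ is nonempty (otherwise $A\cap B=B$ would separate $Z$, against condition (3)) and disjoint from both $W$ and $A\cap B$, so $Z\setminus W\not\subseteq A\cap B$. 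Lemma \ref{L:sep}, applied to the closed set $A\cap B$ with $F=W$, then shows that $A\cap B$ separates $Z$, contradicting condition (3).

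I expect the only genuine obstacle to be the bookkeeping in this last case: pushing $\bd W$ all the way down to $A\cap B$ rather than merely $A\cup B$ (where it obviously sits), and making sure the non-containments $B\not\subseteq A$ and $C\setminus A\neq\emptyset$ really are supplied by conditions (1)--(3) and Lemma \ref{L:quas}. The blob cases, by contrast, are pure manipulation of the definition of the betweenness relation.
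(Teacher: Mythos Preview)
Your proposal is correct and follows essentially the same approach as the paper's proof: the contradiction argument splits into the case $A,B\in\cA$ (where you intersect the two ``far'' sides $V\cap V'$, push $\bd(V\cap V')$ down into $A\cap B$, and invoke Lemma~\ref{L:sep} against condition~(3)) and the case where at least one of $A,B$ is a blob (handled by unpacking the staged definition of betweenness and, when both are blobs, producing a cut $E\in(A,B)$ via Lemma~\ref{L:blob}). The only difference is organizational: the paper uses the symmetry of the hypotheses $A\in(B,C)$, $B\in(A,C)$ to assume without loss of generality that $A\in\cB$, collapsing your three ``blob'' subcases to two, and its verification of $\bd V\subseteq A\cap B$ is phrased via the single observation $A\triangle B\subseteq U_1\cup U_2$ rather than your region-by-region check; your treatment is slightly more careful in explicitly noting $C\setminus E\neq\emptyset$, which the paper leaves implicit.
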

 \begin{proof} Assume by way of contradiction that we have $A,B,C \in \cP$ with  $A \in (B,C)$ and $B \in (A,C)$.
 
\begin{Notes} \item \label{3,1} $A, B \in \cA$.  \hfill\break
Since $A \in (B,C)$ then there is a separation $(U_1,V_1)$ of $Z\setminus A$ with $B\setminus A \subseteq U_1$ and $C\setminus A \subseteq V_1$. Similarly there is a separation $(U_2,V_2)$ of $Z\setminus B$ with $A\setminus B\subseteq U_2$ and $C\setminus B \subseteq V_2$.
Notice that $U_1,U_2,V_1,V_2$ are all open sets of $Z$.   Let $V =  V_1\cap  V_2$.  

We will show that $A\cap B$ separates $Z$.   We first show that $V $ is nonempty. By condition (3) on $\cA$, $C \setminus A$ and $ B\setminus A$ are nonempty.  Since $  B \setminus A \subseteq U_1$ and $C \setminus A \subseteq V_1$, it follows that $C\setminus A \not \subseteq B \setminus A$ which implies that $C \not \subseteq A \cup B$.
 Thus  $$V \supseteq (C \setminus A) \cap (C \setminus B) = C\setminus(A\cup B) \neq \emptyset,$$ so $V$ is anonempty open set of $Z$.

 Since $U_1 \cup U_2 \subseteq Z \setminus V$, then $Z\setminus V \not \subseteq A \cap B$.  
Notice that  $$\bd V =\bd(V_1 \cap V_2) \subseteq (\bd V_1) \cup(\bd V_2) \subseteq A \cup B$$
However, the $A \triangle B \subseteq (U_1 \cup U_2)$  which is an open set disjoint from $V$ and so contains no boundary points of $V$.  Thus $\bd V \subseteq A \cap B$.  Clearly $V \cap (A \cap B) = \emptyset$, so by Lemma \ref{L:sep}, $A\cap B$ separates $Z$, which contradicts (3).

  \item $\{A,B\} \not \subseteq \cA$ \hfill\break
 Say $ A  \in \cB$.  Since $A \in (B,C)$ by definition  $[B,A) \cap (A, C]\cap \cA = \emptyset$.  Thus if $B \in \cA$ then $B \not \in (A,C] \supseteq (A,C)$, a contradiction. 
 
 We are left with the sub-case where $B  \in \cB$.  Since $B \in (A,C)$ then $[A,B) \cap (B,C]\cap \cA = \emptyset$. Since $A \neq B$ there is $D \in \cA$ with $D \in (A,B)$.  Thus there is a separation $(U, V)$ of $Z \setminus D$ with $A \subseteq U$ and $B \subseteq V$.  Either $C \setminus D \subseteq U$ or $C \setminus D \subseteq V$.  If $C \setminus D \subseteq U$, then $D \in (A,B) \cap(B,C)\cap \cA= \emptyset$ a contradiction.  Similarly if $C \setminus D \subseteq V$ then $ D \in (B,A) \cap (A,C) \cap \cA= \emptyset$ again a contradiction.
\end{Notes}
 \end{proof}
 \begin{Lem}\label{L:refine}If $(U_1,V_1)$ and $(U_2,V_2)$ are separations of the space $Z$, and if $U_1\cap U_2 \neq \emptyset$, then $(U_1 \cap U_2, V_1\cup V_2)$ is a separation of $Z$.
 \end{Lem}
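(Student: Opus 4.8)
The plan is to verify the four defining properties of a separation directly from the definitions; nothing beyond elementary point-set manipulation is needed. Recall that $(U,V)$ is a separation of $Z$ exactly when $U$ and $V$ are non-empty, open, disjoint, and satisfy $Z = U \cup V$. So I would simply check these four conditions for the pair $(U_1 \cap U_2, V_1 \cup V_2)$, one at a time.

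For openness, $U_1 \cap U_2$ is open as a finite intersection of open sets and $V_1 \cup V_2$ is open as a union of open sets. For non-emptiness, $U_1 \cap U_2 \neq \emptyset$ is precisely the hypothesis, while $V_1 \cup V_2 \supseteq V_1 \neq \emptyset$ because $(U_1, V_1)$ is a separation. For disjointness, I would distribute: $(U_1 \cap U_2) \cap (V_1 \cup V_2) = (U_1 \cap U_2 \cap V_1) \cup (U_1 \cap U_2 \cap V_2)$, and the first summand lies in $U_1 \cap V_1 = \emptyset$ and the second in $U_2 \cap V_2 = \emptyset$, so the whole intersection is empty. Finally, for covering, take $z \in Z$: either $z \in V_1 \cup V_2$, and we are done, or $z \notin V_1$ and $z \notin V_2$, in which case $z \in U_1$ since $Z = U_1 \cup V_1$ and $z \in U_2$ since $Z = U_2 \cup V_2$, whence $z \in U_1 \cap U_2$. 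Thus $Z = (U_1 \cap U_2) \cup (V_1 \cup V_2)$, and all four conditions hold.

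I do not expect any genuine obstacle here; the statement is at the level of the ``3rd chapter of an undergraduate topology text'' the introduction alludes to. The one point worth being deliberate about is that the argument must not, and does not, invoke $V_1 \cap V_2 = \emptyset$ — that need not hold — so the disjointness step is written to use only $U_1 \cap V_1 = \emptyset$ and $U_2 \cap V_2 = \emptyset$.
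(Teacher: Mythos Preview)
Your proof is correct and follows essentially the same approach as the paper: both verify directly that $U_1\cap U_2$ and $V_1\cup V_2$ are nonempty, open, disjoint, and cover $Z$. The only cosmetic difference is that the paper establishes the covering $Z=(U_1\cap U_2)\cup(V_1\cup V_2)$ via the distributive law $[(U_1\cup V_1)\cup V_2]\cap[V_1\cup(U_2\cup V_2)]=Z\cap Z$ rather than by an element chase, but this is the same argument.
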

 \begin{proof}  Clearly $U_1\cap U_2$ and $V_1\cup V_2$ are both nonempty open sets.  $$ [U_1 \cap U_2]\cap[V_1 \cup V_2]=[(U_1\cap V_1) \cap U_2]\cup [U_1 \cap(U_2 \cap V_2)  ] = \emptyset \cup \emptyset = \emptyset$$
$$ [U_1 \cap U_2]\cup[V_1 \cup V_2]=[(U_1\cup V_1)\cup V_2]\cap [V_1 \cup (U_2\cup V_2)]= Z \cap Z = Z$$
 \end{proof}
  \begin{Lem}\label{L:contain} Let  $A,C \in \cP$ and $B,D \in \cA$ all distinct.  Suppose that $(U,V)$ is a separation of $Z \setminus B$ with $A \setminus B \subseteq U$ and $C \setminus B \subseteq V$, and that $(O, W)$ is a separation  of $Z \setminus D$ with $B \setminus D \subseteq O$ and $C \setminus D \subseteq W$, then  $U \cap W = \emptyset$, and in fact $U \subsetneq O$.
 \end{Lem}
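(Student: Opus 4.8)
The plan is to exploit the ``tree picture'' in which $B$ lies between $A$ and $C$ while $D$ lies between $B$ and $C$, so that the four sets should occur in the order $A,B,D,C$; then $U\subsetneq O$ just says that the $A$-side of $B$ sits inside the $B$-side of $D$. First I would record the elementary containments forced by the hypotheses. Since $B,D\in\cA$ are distinct, condition (3) gives $B\not\subseteq D$ and $D\not\subseteq B$ (otherwise $B\cap D$ would equal a cut and hence separate $Z$, contradicting condition (3)); and similarly $C\not\subseteq B$ and $C\not\subseteq D$ (using condition (3) when $C$ is a cut, and the maximality of blobs when $C$ is a blob). Consequently $B\setminus D$, $D\setminus B$, $C\setminus B$, $C\setminus D$ are all nonempty. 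Finally, from $B\setminus D\subseteq O$ (so $W$ misses $B\setminus D$) together with $W\cap D=\emptyset$ one gets $W\cap B=\emptyset$, hence $W\subseteq Z\setminus B$ and $W$ is the disjoint union of the $Z$-open sets $W\cap U$ and $W\cap V$.

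The first substantive step is to locate $D$ relative to the separation of $Z\setminus B$: I claim $D\setminus B\subseteq V$. Since $B$ does not separate $D$ (condition (2)), $D\setminus B$ lies in a single quasicomponent of $Z\setminus B$, hence $D\setminus B\subseteq U$ or $D\setminus B\subseteq V$. If $D\setminus B\subseteq U$, then, as $C\setminus B\subseteq V$, the separation $(U,V)$ witnesses $B\in(D,C)$; but $(O,W)$ witnesses $D\in(B,C)$, and these two facts together contradict Theorem~\ref{T:axiom 3}. So $D\setminus B\subseteq V$, and therefore $U\cap D\subseteq U\cap(V\cup B)=\emptyset$; thus $U\subseteq Z\setminus D=O\sqcup W$, so $U$ is the disjoint union of the $Z$-open sets $U\cap O$ and $U\cap W$.

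It remains to prove $U\cap W=\emptyset$, which is the heart of the matter and where condition (3) finally does the decisive work. Suppose $U\cap W\neq\emptyset$ and set $F=U\cap W$, a nonempty open subset of $Z$ disjoint from $B$. Using that $U$ is open (so $\bd U\subseteq B$) and $W$ is open (so $\bd W\subseteq D$), together with $U\cap D=\emptyset$ and $W\cap B=\emptyset$ from the previous steps, one computes $\overline{F}\subseteq\overline{U}\cap\overline{W}\subseteq(U\cup B)\cap(W\cup D)=F\cup(B\cap D)$, so $\bd F\subseteq B\cap D$. Since $F$ is nonempty and disjoint from $B$ we have $F\not\subseteq B\cap D$, and since the nonempty set $B\setminus D$ is contained in $Z\setminus F$ we also have $Z\setminus F\not\subseteq B\cap D$. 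By Lemma~\ref{L:sep} with $E=B\cap D$, the set $B\cap D$ separates $Z$, contradicting condition (3). Hence $U\cap W=\emptyset$, i.e.\ $U\subseteq O$, and the inclusion is proper because $B\setminus D$ is a nonempty subset of $O$ disjoint from $U$. The delicate point is precisely this last step: turning the heuristic ``the region $U\cap W$ ought to be empty'' into an honest separation of $Z\setminus(B\cap D)$ via the closure estimate, which is exactly where the genuinely new hypothesis (3) must be invoked; everything before it is routine manipulation of separations plus one appeal to Theorem~\ref{T:axiom 3}.
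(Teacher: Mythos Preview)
Your argument is correct and follows essentially the same route as the paper: use Theorem~\ref{T:axiom 3} to force $D\setminus B\subseteq V$, then show that a nonempty $U\cap W$ would have its boundary contained in $B\cap D$ and invoke Lemma~\ref{L:sep} to contradict condition~(3), and finally read off $U\subsetneq O$. Your closure computation $\overline{U\cap W}\subseteq(U\cup B)\cap(W\cup D)=F\cup(B\cap D)$ is a slightly cleaner packaging of the same boundary estimate the paper obtains by checking that $B\setminus D\subseteq O$ and $D\setminus B\subseteq V$ each miss $\bd(U\cap W)$; you also establish $U\cap D=\emptyset$ before the contradiction step rather than after, which streamlines the logic but does not change the substance.
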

 \begin{proof} By definition, $B \in (A,C)$ and $D \in (B,C)$.  By theorem \ref{T:axiom 3} $B \not \in (D,C)$ so $D \setminus B \subset V$.  Suppose that $U \cap W \neq \emptyset$.    We will show that $B \cap D$ separates which contradicts condition (3) on $\cA$.
 Clearly $\bd (U \cap W) \subseteq (\bd U) \cap (\bd W) \subseteq B \cup D$.  Notice that  $B\setminus D\subseteq  O$ and so $(B \setminus D)\cap \bd (U\cap W)= \emptyset$ and similarly $D\setminus B\subseteq V$ and so $D \setminus B \cap \bd (U\cap V)= \emptyset$.  It follows that $\bd (U \cap W) \subseteq B \cap D$.  Since $  O \cup V \subseteq Z\setminus(U \cap W)$ and $(O \cup V )\cap (B \cap D) =\emptyset$ by Lemma \ref{L:sep} $B \cap D$ separates $Z$ which is a contradiction.  Thus $U \cap W = \emptyset$. 
 
  It follows that $U \subseteq O \cup D$.  Since $D \in (B,C), $ by Theorem \ref{T:axiom 3} $B \not \in (D,C)$ and so $D \setminus B \subseteq V$.  Thus $D \subseteq [B\cup V]$ and  $U \cap D \subseteq U \cap [B \cup V]= \emptyset$ .  We have shown  $U \subseteq O$.  Since $B \cap U = \emptyset $ and $\emptyset \neq B\setminus D\subseteq O$, then $U \subsetneq O$.

  \end{proof}
 \begin{Cor}\label{C:contain} Let  $A,C \in \cP$ and $B \in (A, C) \cap \cA$.  Then $(B,C) \cap \cA \subseteq (A,C)$.  
 \end{Cor}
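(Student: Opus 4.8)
The plan is to deduce this straight from Lemma \ref{L:contain}. Fix $D \in (B,C)\cap\cA$; the goal is to show that $D$ separates $A$ from $C$, i.e.\ $D \in (A,C)$. A preliminary bookkeeping step checks that $A,B,C,D$ are pairwise distinct: $B\in(A,C)$ forces $A\neq C$ (else $(A,C)=\emptyset$ by pretree axiom (1)) and $B\notin\{A,C\}$; $D\in(B,C)$ forces $D\notin\{B,C\}$; and $D=A$ would give $A\in(B,C)$ alongside $B\in(A,C)$, contradicting Theorem \ref{T:axiom 3}. So Lemma \ref{L:contain} is available.

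From $B\in(A,C)\cap\cA$ take a separation $(U,V)$ of $Z\setminus B$ with $A\setminus B\subseteq U$ and $C\setminus B\subseteq V$, and from $D\in(B,C)\cap\cA$ take a separation $(O,W)$ of $Z\setminus D$ with $B\setminus D\subseteq O$ and $C\setminus D\subseteq W$. Lemma \ref{L:contain} then gives $U\cap W=\emptyset$ (the sharper conclusion $U\subsetneq O$ will not be needed). I also record, exactly as in the proof of that lemma, that $D\setminus B\subseteq V$: since $D$ is a cut it is inseparable, so $D\setminus B$ (nonempty, by conditions (1) and (3)) lies in a single quasicomponent of $Z\setminus B$, and were it contained in $U$ then $B$ would separate $D$ from $C$, contradicting $D\in(B,C)$ via Theorem \ref{T:axiom 3}.

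The crux is to show $A\setminus D\subseteq O$. Since $A\in\cP$ is inseparable, $D$ does not separate $A$, and $A\setminus D$ is nonempty --- if $A\in\cA$ by conditions (1),(3), and if $A\in\cB$ because $A\subseteq D$ would contradict maximality of the inseparable set $A$ against the cut $D$ --- so $A\setminus D$ lies in one quasicomponent of $Z\setminus D$, hence $A\setminus D\subseteq O$ or $A\setminus D\subseteq W$. If $A\setminus D\subseteq W$, then $A\setminus B\subseteq U$ together with $U\cap W=\emptyset$ give $A\setminus(B\cup D)=\emptyset$, i.e.\ $A\subseteq B\cup D$, whence $A\setminus B\subseteq D\setminus B\subseteq V$; but $A\setminus B\subseteq U$ and $U\cap V=\emptyset$, so $A\subseteq B$, contradicting $A\setminus B\neq\emptyset$ (which holds because $B$ separates $A$ from $C$). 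Therefore $A\setminus D\subseteq O$, while $C\setminus D\subseteq W$, and both are nonempty; as $(O,W)$ is a separation of $Z\setminus D$, points of $O$ and points of $W$ lie in different quasicomponents, so $D$ separates $A$ from $C$, giving $D\in(A,C)$. I anticipate no genuine obstacle: the one non-formal ingredient is the appeal to Lemma \ref{L:contain} for $U\cap W=\emptyset$, and everything else is elementary manipulation of the two separations.
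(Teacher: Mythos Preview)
Your proof is correct and follows essentially the same approach as the paper's: set up the two separations $(U,V)$ and $(O,W)$, invoke Lemma~\ref{L:contain} for $U\cap W=\emptyset$, and derive a contradiction from the assumption $A\setminus D\subseteq W$. The only cosmetic difference is in how the contradiction is extracted---the paper shows $A\cap B=A\cap D\subseteq B\cap D$ and concludes $A\setminus B=A\setminus(B\cup D)\subseteq U\cap W=\emptyset$, whereas you route through $A\setminus B\subseteq D\setminus B\subseteq V$; both reach $A\setminus B=\emptyset$ and the same final contradiction. Your added distinctness bookkeeping and the explicit handling of the case $A\in\cB$ are welcome clarifications.
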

 \begin{proof} Let $D \in (B,C) \cap \cA$.  Since $B \in (A,C)\cap \cA$ there is a separation $(U,V)$ of $Z \setminus B$ with $A \setminus B \subseteq U$ and $C \setminus B \subseteq V$.   
  Since $D \in (B,C) \cap \cA$, there is a separation $(O, W)$ of $Z \setminus D$ with $B \setminus D \subseteq O$ and $C \setminus D \subseteq W$.  By Lemma \ref{L:contain} $U \cap W = \emptyset$. 
  
   It suffices to show that $A \setminus D \subseteq O$.  {\bf Suppose not}, then $A\setminus D \subseteq W$.  Since $A \setminus B \subseteq U$ and $D \setminus B \subset V$, then $(A\setminus B) \cap (D\setminus B)=\emptyset$.  Similarly since $A \setminus D \subseteq W$ and $B \setminus D \subseteq O$, then  $(A\setminus D) \cap (B\setminus D)=\emptyset$.
   It follows that $$A \cap B =A \cap D \subseteq B \cap D\,.$$ 
   We now have $\emptyset =U \cap W \supseteq A\setminus(B \cup D) = A\setminus B =A\setminus D \neq \emptyset$ by condition (3) on $\cA$,  a contradiction.
   
 \end{proof}
 \begin{Thm}\label{T:axiom 4}  $\forall \, A,B,C \in \cP$, $(A,C) \subseteq (A,B] \cup [B,C)$
 \end{Thm}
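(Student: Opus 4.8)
The plan is to verify the inclusion in contrapositive form, treating separately the case where $D$ is a cut and the case where $D$ is a blob. First the degenerate cases: if two of $A,B,C$ coincide the inclusion $(A,C)\subseteq(A,B]\cup[B,C)$ holds by inspection of the right-hand side, so assume $A,B,C$ are pairwise distinct, and let $D\in(A,C)$. If $D=B$ there is nothing to prove; and $D=A$ or $D=C$ cannot occur, since a cut never separates a set from itself (as $X\setminus X=\emptyset$) and the extended betweenness relation places a blob only between two elements distinct from it. So we may assume $D\notin\{A,B,C\}$, and it suffices to show that $D\notin(A,B)$ and $D\notin(B,C)$ together force $D\notin(A,C)$.

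\emph{Case $D\in\cA$.} Since $D$ is distinct from $A,B,C$, each of $A\setminus D$, $B\setminus D$, $C\setminus D$ is nonempty: for a cut this is conditions (1) and (3) on $\cA$, and for a blob it follows from maximality, since a maximal inseparable set contained in the cut $D$ would equal $D$, contradicting $\cA\cap\cB=\emptyset$. Moreover $D$ separates none of $A,B,C$, cuts being inseparable and blobs maximal inseparable, so $B\setminus D$ lies in a single quasicomponent $Q$ of $Z\setminus D$. Because $D\notin(A,B)$, $D$ does not separate $A$ from $B$, which together with $A\setminus D\neq\emptyset$ and $B\setminus D\subseteq Q$ forces $A\setminus D\subseteq Q$; symmetrically $D\notin(B,C)$ forces $C\setminus D\subseteq Q$. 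Hence $A\setminus D$ and $C\setminus D$ lie in the same quasicomponent of $Z\setminus D$, so $D$ does not separate $A$ from $C$, i.e.\ $D\notin(A,C)$.

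\emph{Case $D\in\cB$.} Here the hypotheses unwind, through the definition of blob-betweenness (legitimate since $D$ is distinct from $A$, $B$ and $C$), to $[A,D)\cap(D,B]\neq\emptyset$ and $[B,D)\cap(D,C]\neq\emptyset$; choose $E$ and $F$ in these two sets respectively. It suffices to produce a point of $[A,D)\cap(D,C]$. If $E\in(D,C]$ then $E$ is such a point, and if $F\in[A,D)$ then $F$ is one; so assume neither holds and aim for a contradiction. The plan here is a finite case analysis on the status of $E$ and $F$ --- equal to $B$, a cut strictly between the relevant pair, or a blob --- using Corollary \ref{C:contain} and Lemma \ref{L:contain}, which linearly order the cuts inside an interval, together with Theorem \ref{T:axiom 3}. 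For instance, if $E=F=B$ then $B\in(A,D)$ and $B\in(D,C)$, so $B\in[A,D)\cap(D,C]$ immediately; the remaining combinations are to be dispatched by chasing the betweenness relations, showing that each either forces one of $E,F$ into $[A,D)\cap(D,C]$ or else contradicts Theorem \ref{T:axiom 3}.

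The real obstacle is Case $D\in\cB$: because blob-betweenness is defined only indirectly, through intervals of cuts, one cannot argue with a single separation of $Z\setminus D$ as in the cut case, and must instead manipulate the interval conditions directly. I anticipate that the cut--cut and cut--blob sub-cases reduce cleanly to Corollary \ref{C:contain} and Lemma \ref{L:contain}, while the blob--blob sub-case will first require the full-interval strengthening of Corollary \ref{C:contain} --- that $B\in(A,C)\cap\cA$ implies $(B,C)\subseteq(A,C)$, not merely $(B,C)\cap\cA\subseteq(A,C)$ --- which is itself proved by a smaller instance of the same case analysis.
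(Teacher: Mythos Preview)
Your cut case is correct and is the contrapositive of the paper's direct argument; nothing to add there.

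The blob case has a genuine gap, and it stems from a misreading of the staged definition of betweenness. When $D\in\cB$ and $D\notin(A,B)$, the witness $E\in[A,D)\cap(D,B]$ lives in the \emph{first-stage} intervals of Definition~\ref{D:int}, whose only members are cuts together with the endpoints. If $E=A$ then $A\in(D,B)$ forces $A\in\cA$; if $E=B$ then $B\in(A,D)$ forces $B\in\cA$. In every case $E\in\cA$, and likewise $F\in\cA$. So your ``blob--blob sub-case'' never occurs, and the ``full-interval strengthening of Corollary~\ref{C:contain}'' you anticipate needing is unnecessary.

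Once you know $E,F\in\cA$, the paper's argument is short and you should simply carry it out rather than leave it as an anticipation. Assume $D\in(A,C)$, so $[A,D)\cap(D,C]\cap\cA=\emptyset$; in particular $E\neq F$. If $E=B$ (so $B\in\cA$ and $B\in[A,D)$), then $F\in[B,D)\cap\cA$ together with Corollary~\ref{C:contain} gives $F\in[A,D)$, contradicting $F\in(D,C]$; symmetrically for $F=B$. Otherwise $E\in(D,B)\cap\cA$ and $F\in(B,D)\cap\cA$: take a separation $(O,W)$ of $Z\setminus E$ with $B\setminus E\subseteq O$ and $D\setminus E\subseteq W$, and look at where $F\setminus E$ lands. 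If $F\setminus E\subseteq O$ then $E\in(F,D)$, and since $F\in[C,D)\cap\cA$ Corollary~\ref{C:contain} yields $E\in(C,D)$, contradicting $E\in[A,D)$. If $F\setminus E\subseteq W$ then $E\in(B,F)$, so by Theorem~\ref{T:axiom 3} $F\notin(B,E)$; but $F\in(B,D)$, so by the already-proven cut case $F\in(E,D)$, whence Corollary~\ref{C:contain} (with $E\in[A,D)\cap\cA$) gives $F\in(A,D)$, again a contradiction. That is the whole argument; no further case analysis or auxiliary strengthening is required.
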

 \begin{proof} If $A,B,C$ are not distinct, the result is obvious so we assume $A,B,C$ distinct.  Let $D \in (A,C)$. We may assume that $D \neq B$.  We must show $D \in (A,B) \cup (B,C)$.
 \begin{Notes}
 \item $D \in \cA$ \hfill\break
 There is a separation $(U,V)$ of $Z\setminus D$ with $A\setminus D \subseteq U$ and $C \setminus D \subseteq V$.  Since $D$ doesn't separate $B$, either $B\setminus D \subseteq U$ or $ B \setminus D \subseteq V$. If  $B\setminus D \subseteq U$ then by definition $D \in (B,C)$ and if $ B \setminus D \subseteq V$ then $D \in (A,B)$.
 \item $D \in \cB$\hfill\break
 Then by definition $[A, D) \cap (D, C]\cap \cA = \emptyset$. Suppose $D \not \in (A, B)\cup (B,C)$, then by definition there is $E \in [A,D) \cap (D,B] \cap \cA$ and there is $F \in [B,D) \cap (D,C] \cap \cA$.  Since $[A, D) \cap (D, C]\cap \cA = \emptyset$, $E \neq F$.

 \begin{Subcase}
 \item $E \in [A,D) \cap (D,B) \cap \cA$ and $F \in (B,D) \cap (D,C] \cap \cA$.\hfill\break
 Let $(O,W)$ be a separation of $Z\setminus E$ with $B\setminus E \subseteq O$ and $D  \subseteq W$.  Either $F\setminus E \subseteq O$, or $F\setminus E \subseteq W$.  

Consider the case  $F\setminus E \subseteq O$, so  $E \in (F,D)$.  Since $F \in [C,D) \cap \cA$,  by Corollary \ref{C:contain}  $E \in(F,D) \cap \cA \subseteq (C, D)$ and we have the contradiction  $$E \in [A, D) \cap (D, C]\cap \cA = \emptyset.$$

Now consider the  case  $F\setminus E \subseteq W$, so $E \in (B,F)$.  By Theorem \ref{T:axiom 3} $F \not \in (B,E)$ so by Case I $F \in (E,D)$.
Since $E \in [A,D) \cap \cA$ then by Corollary \ref{C:contain}  $F \in (A,D)$ and we have the contradiction  $$F \in [A, D) \cap (D, C]\cap \cA = \emptyset.$$
\item $E =B$ or $F=B$  \hfill\break
Say $B=E \in [A,D)$.   Since $F \in [B,D)$  by Corollary \ref{C:contain} $F \in [A,D)$ again with the contradiction 
$$F \in [A, D) \cap (D, C]\cap \cA = \emptyset.$$
 
 \end{Subcase}
 \end{Notes}
 \end{proof}
 Thus we have shown that $\cP$ is a pretree.  
  \begin{Def}
 A pretree $\cR$ is called preseparable if for any linearly ordered subset $L$ of $\cR$, there is a countable subset $Q \subseteq L$ such that for any distinct $a,b \in L$, $[a,b] \cap Q \neq \emptyset$.  
 \end{Def}
 \begin{Cor}\label{C: preseparable}  If for any linearly ordered subset $L$ of $\cP$, $L \cap \cA$ is countable  then $\cP$ is preseparable.  
 \end{Cor}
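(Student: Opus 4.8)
The plan is to start from $Q_{0}:=L\cap\cA$, which is countable by hypothesis, and to enlarge it to a countable $Q\subseteq L$ that in addition ``sees'' every pair of blobs of $L$. Since $\cP=\cA\sqcup\cB$ and, by Lemma~\ref{L:blob}, any two distinct elements of $\cB$ have a cut of $\cA$ strictly between them in $\cP$, the pairs $a,b\in L$ not already handled by $Q_{0}$ are exactly the pairs of blobs with no element of $\cA\cap L$ strictly between them; for such a pair Lemma~\ref{L:blob} still furnishes a cut of $\cP$ strictly between $a$ and $b$, but this cut need not lie in $L$, and surmounting this is the only real difficulty.

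First I would introduce the set $I$ of all cuts $C\in\cA$ lying strictly between two elements of $L$. Using the elementary facts, valid in any pretree (hence here, by Theorems~\ref{T:axiom 3} and~\ref{T:axiom 4}), that each interval $[p,q]$ is linearly ordered and contains $[x,y]$ whenever it contains $x$ and $y$, one sees that every finite subset of $L\cup I$ lies inside an interval $[p,q]$ with $p,q\in L$; hence $L\cup I$ is linearly ordered, and the hypothesis applied to $L\cup I$ makes $(L\cup I)\cap\cA$, in particular $I$, countable. Each $C\in I\setminus L$ is then comparable with every element of $L$ and so determines a Dedekind cut $(L_{<C},L_{>C})$ of $L$ (that is, $L_{<C}=\{x\in L:x<C\}$, $L_{>C}=\{x\in L:x>C\}$); let $\mathcal D$ be the set of Dedekind cuts of $L$ arising this way. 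Being the image of $I\setminus L$ under $C\mapsto(L_{<C},L_{>C})$, $\mathcal D$ is countable.

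The key step is then: for every $(L_{1},L_{2})\in\mathcal D$, the lower piece $L_{1}$ has a countable cofinal subset and the upper piece $L_{2}$ a countable coinitial subset. I would argue for $L_{1}$ (the other is dual): if $L_{1}$ has a maximum we are done, so assume not. The cuts of $\cP$ lying strictly between two elements of $L_{1}$ form a subset $I(L_{1})\subseteq I$, hence a countable set, and $L_{1}\cap\cA\subseteq L\cap\cA$ is countable; choosing, for each $C'\in I(L_{1})$ that has some element of $L_{1}$ above it, one such element $\varphi(C')\in L_{1}$, I claim $S:=\{\varphi(C'):C'\in I(L_{1})\}\cup(L_{1}\cap\cA)$ is cofinal in $L_{1}$. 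Indeed, given $y\in L_{1}$, pick $y'\in L_{1}$ with $y<y'$ (possible, as $L_1$ has no maximum): if $y'\in\cA$ then $y'\in S$ lies above $y$, while if $y'\in\cB$ then Lemma~\ref{L:blob} gives a cut $C'\in(y,y')$, which lies in $I(L_{1})$, so $\varphi(C')$ is defined and $y<C'<\varphi(C')\in S$. This is the step I expect to be the main obstacle: it is essentially the only place the ``locally countable'' hypothesis does real work, and the bookkeeping that separates blobs from cuts among the endpoints in sight must be done with care.

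Finally I would set
\[
Q\ :=\ (L\cap\cA)\ \cup\ \bigcup_{(L_{1},L_{2})\in\mathcal D}\bigl(Y^{-}_{(L_{1},L_{2})}\cup Y^{+}_{(L_{1},L_{2})}\bigr),
\]
where $Y^{-}_{(L_{1},L_{2})}$ is $\{\max L_{1}\}$ if that maximum exists and a fixed countable cofinal subset of $L_{1}$ otherwise, and dually for $Y^{+}$. This is a countable union of countable subsets of $L$, hence a countable $Q\subseteq L$. To check preseparability, take distinct $a,b\in L$, say $a<b$: if $a\in\cA$ or $b\in\cA$ then $a$ or $b$ lies in $[a,b]\cap Q$; if $a,b\in\cB$ and some $d\in\cA\cap L$ satisfies $a<d<b$ then $d\in[a,b]\cap Q$; otherwise $a,b\in\cB$ with no cut of $L$ strictly between them, so Lemma~\ref{L:blob} gives $C\in(a,b)\cap\cA$ with $C\notin L$, whence $(L_{<C},L_{>C})\in\mathcal D$, $a\in L_{<C}$, $b\in L_{>C}$. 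If $L_{<C}$ has a maximum $m$, then $a\le m<C<b$, so $m\in[a,b]\cap Q$ (this also disposes of the ``jump'' case $m=a$); if not, cofinality gives $s\in Y^{-}_{(L_{<C},L_{>C})}$ with $s\ge a$, and $s\in L_{<C}$ forces $s<C<b$, so again $s\in[a,b]\cap Q$. Every pair being handled, $\cP$ is preseparable.
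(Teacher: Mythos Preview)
Your argument is correct, and in fact it is considerably more careful than the paper's own proof. The paper simply declares $Q=\cA\cap L$ and appeals to Lemma~\ref{L:blob}: between any two blobs $A,B\in L$ there is a cut $C\in(A,B)\cap\cA$, so ``$\cA\cap L$ is the countable set of the definition.'' This is a two-line argument, whereas you build the auxiliary linearly ordered set $L\cup I$, pass to Dedekind cuts, and manufacture countable cofinal/coinitial families.

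The extra work you do is not wasted. The paper's short proof tacitly assumes that the cut $C$ produced by Lemma~\ref{L:blob} lies in $L$, but nothing in the hypotheses forces this: $L$ could, for instance, consist entirely of blobs. Since the definition of preseparable requires $Q\subseteq L$, the paper's argument as written does not close the case of two blobs in $L$ with no element of $\cA\cap L$ between them. You identify exactly this gap (``this cut need not lie in $L$, and surmounting this is the only real difficulty'') and your machinery --- enlarging to $L\cup I$ to make $I$ countable, then using the induced Dedekind cuts to choose countably many witnesses back in $L$ --- is precisely what is needed to repair it. What the paper's approach buys is brevity (and it is fine once one silently replaces $L$ by a maximal linearly ordered extension, where the cut from Lemma~\ref{L:blob} must lie); what your approach buys is a proof that actually verifies the stated definition for an \emph{arbitrary} linearly ordered $L$.

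Two very small points. First, in your cofinality argument you invoke Lemma~\ref{L:blob} on $y,y'$ when $y'\in\cB$; strictly speaking that lemma needs both to be maximal inseparable, so you should note that if $y\in\cA$ then already $y\in L_1\cap\cA\subseteq S$ and there is nothing to prove. Second, your sets $Y^{+}_{(L_1,L_2)}$ are never used in the final verification (you only argue via $L_{<C}$), so they can be dropped; this does no harm but streamlines the construction.
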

 \begin{proof} Let $L$ be a linearly ordered subset of $\cP$ and let $A,B \in L$ distinct.  We may assume $A, B \in  \cB$.  By
 Lemma \ref{L:blob}  $(A,B)\cap \cA \neq \emptyset$, so $\cA\cap L$ is the countable set of the definition. 
 \end{proof} 
 We recall some definitions and properties of pretrees due to 
 Bowditch \cite{BOW5}
 \begin{Def}In a pretree $\cR$, every interval has a  linear order (exactly two actually) so that the subintervals are the same in the pretree as in the linear order.  Such a linear order will be called a {\em natural linear order}  A subset $X \subset \cR$ is called linearly ordered, if $X$ has a linear order so that the intervals of the linear oder on $X$ are the same intervals in $\cR$.  We will also refer to such a linear order as a {\em natural  linear order}.   A subset $Y$ of a pretree $\cR$ is called convex if for any $A,B \in Y$, $[A,B] \subset Y$.   
 \end{Def} Clearly a convex subset of a linearly ordered space will be linearly ordered (by the restriction of the natural linear order).  Also the intersection of convex sets is convex.
 \begin{Thm}\label{T:BOW} \cite{BOW5}  If $\cR$ is a pretree and $x,y,z,w \in \cR$ then
 \begin{enumerate}
 \item If $y \in (x,z)$ and $z \in (y,w)$ then $x<y<z<w$ for a natural linear order on $[x,w]$
  \item If $y \in (x,w)$ and $z \in (y,w)$ then $x<y<z<w$ for a natural linear order on $[x,w]$
  \item If $y \in (x,w)$ and $z \in (x,w)$ then there is a natural linear order on $[x,w]$ with either 
  $x<y<z<w$ or $x<z<y<w$.
 \end{enumerate}
 \end{Thm}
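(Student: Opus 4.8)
The result is Bowditch's --- it is cited here as \cite{BOW5} --- so the expedient option is simply to refer to \cite{BOW5}. Since the paper otherwise works from first principles, though, let me sketch how the three clauses fall out of the four pretree axioms together with the fact recorded in the preceding definition: that every interval $[x,w]$ of a pretree carries a natural linear order whose order-intervals coincide with the pretree-intervals between its points.

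The one load-bearing lemma is that intervals are convex: if $b\in[a,c]$ then $[a,b]\subseteq[a,c]$, and, using $[a,c]=[c,a]$, also $[b,c]\subseteq[a,c]$. I would prove this by reducing to the case $b\in(a,c)$ and taking $u\in(a,b)$ with $u\notin[a,c]$ for contradiction: axiom (4) applied with $A=a$, $B=c$, $C=b$ gives $u\in(a,c]\cup[c,b)$, and since $u\notin(a,c]$ this forces $u\in(b,c)$; then axiom (4) applied with $A=a$, $B=u$, $C=c$ gives $b\in(a,u)\cup(u,c)$; but $b\in(a,u)$ together with $u\in(a,b)$, and $b\in(u,c)$ together with $u\in(b,c)$, each contradict axiom (3) (after a use of the symmetry axiom (2)). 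This is the only part requiring genuine bookkeeping --- two cases, each a one-line application of axiom (3) --- and it is where I expect the modest care to go; it is entirely elementary. Convexity gives what is actually needed downstream: for $a,b\in[x,w]$ the pretree-interval $(a,b)$ lies inside $[x,w]$, hence coincides with the order-interval between $a$ and $b$ in the natural linear order of $[x,w]$.

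With that in hand the three clauses are short. For (3): $y,z\in(x,w)\subseteq[x,w]$, so in the natural linear order with $x<w$ we have $x<y<w$ and $x<z<w$, giving one of $x<y<z<w$ or $x<z<y<w$. For (2): $y\in(x,w)\subseteq[x,w]$, so convexity gives $[y,w]\subseteq[x,w]$ and hence $z\in(y,w)\subseteq[x,w]$; in the order with $x<w$ we get $x<y<w$, and matching $(y,w)$ to the order-interval of $[x,w]$ gives $y<z<w$, so $x<y<z<w$. For (1): I would first upgrade $z\in(y,w)$ to $z\in(x,w)$ --- axiom (4) with $A=y$, $B=x$, $C=w$ gives $z\in(x,y]\cup[x,w)$, the option $z\in(x,y]$ being excluded by axioms (1) and (3) (since $y\in(x,z)$) and $z=w$ by axiom (1), so $z\in(x,w)$ --- and then upgrade $y\in(x,z)$ to $y\in(x,w)$ by the same device with $A=x$, $B=w$, $C=z$, again eliminating the bad cases with axioms (1) and (3) and the hypothesis $z\in(y,w)$. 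Now $y,z\in(x,w)\subseteq[x,w]$; in the order with $x<w$ we have $x<y<w$ and $x<z<w$, and since $[x,z]\subseteq[x,w]$ the relation $y\in(x,z)$ forces $x<y<z$; combining with $z<w$ gives $x<y<z<w$.

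I do not anticipate a real obstacle here: the content is standard pretree combinatorics, the convexity lemma is the crux, and once the relevant points are pinned inside $[x,w]$ everything reduces to translating pretree-betweenness into the ambient linear order.
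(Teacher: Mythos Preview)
The paper does not prove this theorem at all: it is stated with the citation \cite{BOW5} and no proof environment follows. So there is nothing to compare your argument against on the paper's side; your opening remark that ``the expedient option is simply to refer to \cite{BOW5}'' is exactly what the paper does.

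Your added sketch is sound and is the standard route. The convexity lemma (if $b\in[a,c]$ then $[a,b]\subseteq[a,c]$) is indeed the load-bearing step, and your proof of it via axiom~(4) followed by two eliminations using axiom~(3) is correct. Once convexity pins all the relevant points inside $[x,w]$, the translation from pretree-betweenness to the natural linear order is immediate, and your derivations of clauses (1)--(3) go through. One cosmetic point: in clause~(3) the case $y=z$ is not literally covered by ``$x<y<z<w$ or $x<z<y<w$'' with strict inequalities, but this is a quirk of the statement as written, not a defect in your argument, and the paper only ever invokes clause~(3) for distinct $y,z$. In short, you have supplied more than the paper does, and what you supplied is correct.
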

 \begin{Def} For $\cR$ a pretree,
we say $A,B \in \cR$ distinct  are adjacent if $(A,B) = \emptyset$.
 \end{Def}
 \begin{Lem}\label{L:cutblob}
 If two elements of $\cP$ are adjacent, then one of them is a cut and the other is a blob and said cut is contained in said blob.
 \end{Lem}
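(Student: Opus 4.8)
The plan is to take distinct $A,B\in\cP$ with $(A,B)=\emptyset$, rule out the two ``bad'' possibilities — that $A,B$ are both blobs or both cuts — and then pin down the containment in the remaining mixed case. Ruling out two blobs is immediate: by Lemma \ref{L:blob}, two distinct maximal inseparable sets have a cut strictly between them, contradicting $(A,B)=\emptyset$.

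The technical engine I would isolate is a claim: \emph{if $E\in\cA$, $F\in\cB$ and $E\subseteq F$, then $(E,F)=\emptyset$.} To prove it, first note $(E,F)\cap\cA=\emptyset$: a cut $D\in(E,F)$ would, by Lemma \ref{L:quas}, place the nonempty set $E\setminus D$ in one quasicomponent of $Z\setminus D$ and $F\setminus D$ in a different one, impossible since $E\subseteq F$ gives $\emptyset\neq E\setminus D\subseteq F\setminus D$. Then $(E,F)\cap\cB=\emptyset$: a blob $X\in(E,F)$ is a maximal inseparable set distinct from $F$, so Lemma \ref{L:blob} yields a cut $C\in(X,F)$; feeding $X\in(E,F)$ and $C\in(X,F)$ into Theorem \ref{T:BOW}(2) produces a natural linear order $E<X<C<F$ on $[E,F]$, whence $C\in(E,F)\cap\cA$, contradicting the previous sentence. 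Since $\cP=\cA\sqcup\cB$, the claim follows.

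For the both-cuts case, suppose $A,B\in\cA$ are distinct. Then neither contains the other, since $A\subseteq B$ or $B\subseteq A$ would make $A\cap B$ equal to one of them and hence separate $Z$, violating condition (3). Moreover $A\cup B$ is inseparable: if some $D\in\cA$ separated $A\cup B$, the two witnessing points of $(A\cup B)\setminus D$ could not both lie in $A$ nor both in $B$ (cuts are inseparable, by condition (2)), so one lies in $A$ and one in $B$, forcing $D$ to separate $A$ from $B$, i.e. $D\in(A,B)$, contradicting $(A,B)=\emptyset$. Hence $A\cup B$ sits inside a maximal inseparable set $M$; since $M\supsetneq A$ and no cut properly contains a cut (condition (3) again), $M\in\cB$. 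Applying the claim to $(A,M)$ and to $(B,M)$ gives $(A,M)=(M,B)=\emptyset$, so $[A,M)=\{A\}$ and $(M,B]=\{B\}$, which are disjoint since $A\neq B$; by the definition of betweenness for the blob $M$ this says $M\in(A,B)$, contradicting adjacency. Finally, in the mixed case — say $A\in\cA$, $B\in\cB$ — if $A\not\subseteq B$ then $A\cup B$ is separable by maximality of $B$, and the same argument as in the both-cuts case produces a cut in $(A,B)$, a contradiction; hence $A\subseteq B$.

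I expect the genuine obstacle to be controlling open intervals that contain blobs, since blob-betweenness is defined recursively and resists being unwound directly; the point of the claim is precisely to sidestep that recursion by using Lemma \ref{L:blob} together with the order-theoretic Theorem \ref{T:BOW}(2) to convert any hypothetical intervening blob into an intervening cut, where separations give real leverage. The only other thing to watch is the bookkeeping that the triples to which Lemma \ref{L:quas}, Lemma \ref{L:blob} and Theorem \ref{T:BOW} are applied really consist of distinct elements — but this is routine given that $\cP$ is the disjoint union $\cA\sqcup\cB$ and that open intervals in a pretree omit their endpoints.
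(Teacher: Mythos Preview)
Your proof is correct and follows essentially the same architecture as the paper's: rule out two blobs via Lemma~\ref{L:blob}, rule out two cuts by showing $A\cup B$ is inseparable and hence sits in a blob $M$ which then lies in $(A,B)$, and handle the mixed case by the same inseparability argument forcing $A\subseteq B$.

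The one place you do noticeably more work than the paper is your ``technical engine'' claim that $(E,F)=\emptyset$ in the \emph{full} pretree sense whenever $E\in\cA$, $F\in\cB$, $E\subseteq F$. This is true, and your proof of it via Lemma~\ref{L:blob} and Theorem~\ref{T:BOW}(2) is fine, but it is unnecessary: the definition of blob betweenness explicitly uses the intervals of Definition~\ref{D:int}, which contain only cuts (plus endpoints). So to verify $M\in(A,B)$ one only needs $[A,M)\cap(M,B]=\emptyset$ in that restricted sense, and this is immediate once you know no cut separates $A$ from $M$ (since $A\subseteq M$) and $A$ does not separate $M$ from $B$ (since $B\subseteq M$). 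There is no recursion to sidestep. The paper's proof is accordingly a couple of lines; yours is correct but armored against a difficulty that is not actually present.
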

 \begin{proof}By Lemma \ref{L:blob} they cannot both be blobs. Say then that $A,B \in \cP$ adjacent with $A \in \cA$.

Suppose $B \in \cA$.  Then since $(A,B) = \emptyset$, it follows that $A \cup B$ is inseparable, and so $A\cup B$ is contained in some blob $C$.  Since $[A,C)\cap (C,B] = \emptyset$, by definition $C \in (A,B)$ a contradiction.

Thus $B \in \cB$, and since $(A,B) = \emptyset$, $A \cup B$ is inseparable which implies $A \cup B = B$ as required.  
 \end{proof}

 \begin{Def}
 We say that a pretree $\cR$ is complete if every interval is complete as a linearly ordered topological space.  That is every nonempty subset with an upper bound has a supremum.  
 \end{Def} 
  \begin{Def}  For $A,B,C \in \cR$ a pretree, we say  the {\em median} of $A,B,C$ is the intersection $[A,B]\cap [B,C]\cap [A,C]$. 
 It is known that the median is a most one point \cite{BOW5}, but it can be empty.  A pretree $\cR$ is called a {\em median} pretree if no medians of $\cR$ are empty. 

 \end{Def} 
 \begin{Thm}\label{T: Median} If $ \cP$ is complete, then $\cP$ is a medium pretree.
 \end{Thm}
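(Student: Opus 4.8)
Fix distinct $A,B,C\in\cP$ (if two of them coincide, the median is one of the three points), and set $S_A:=[A,B]\cap[A,C]$, $S_B:=[A,B]\cap[B,C]$, $S_C:=[A,C]\cap[B,C]$; the plan is to show that the median $M:=[A,B]\cap[B,C]\cap[A,C]=S_A\cap S_B=S_B\cap S_C=S_A\cap S_C$ is nonempty. By pretree axiom (4) each of the three intervals is the union of the two legs it contains: for $D\in(A,B)$, axiom (4) puts $D$ in $(A,C]$ or in $[C,B)$, so $D\in S_A$ or $D\in S_B$ and $[A,B]=S_A\cup S_B$, and likewise for $[A,C]$ and $[B,C]$. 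In a natural linear order on $[A,B]$ with $A<B$, $S_A$ is an initial segment and $S_B$ a terminal segment; similarly $S_A$ is initial in $[A,C]$ and $S_B,S_C$ are terminal in $[B,C]$ and $[A,C]$, and the orders these legs inherit from the several intervals match up in the obvious way. Since a median has at most one point, we may assume $M=\emptyset$; then $[A,B]=S_A\sqcup S_B$, $[A,C]=S_A\sqcup S_C$ and $[B,C]=S_B\sqcup S_C$ are three Dedekind cuts, in each of which the first leg lies entirely below the second.

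Next I would use completeness: in a complete linear order a Dedekind cut is always realized, i.e.\ its lower part has a greatest element or its upper part has a least element. Since on $S_B$ the order from $[A,B]$ is the reverse of the one from $[B,C]$, while on $S_C$ the orders from $[A,C]$ and $[B,C]$ coincide, each of the three ``branch endpoints'' $a':=\max S_A$, $b':=\min_{[A,B]}S_B=\max_{[B,C]}S_B$ and $c':=\min_{[A,C]}S_C=\min_{[B,C]}S_C$ governs two of the three cuts, and ``all three cuts are realized'' amounts to ``at least two of $a',b',c'$ exist''. Whenever two of them exist, they are adjacent in $\cP$ (each pair bounds the jump between the two legs it joins; e.g.\ $(a',b')=\emptyset$ because $[A,B]=S_A\sqcup S_B$ with $S_A<S_B$), and $a',b',c'$ lie in pairwise disjoint legs, so they are distinct. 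If all three exist, then $a',b',c'$ are pairwise adjacent, so by Lemma \ref{L:cutblob} each of the three pairs is one cut and one blob; but then tagging each of $a',b',c'$ as ``cut'' or ``blob'' would properly two-colour a triangle, which is impossible. Hence exactly two of $a',b',c'$ exist.

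After relabelling $A,B,C$ we may assume $a'$ and $b'$ exist and $c'$ does not; then $a'$ and $b'$ are adjacent, and by Lemma \ref{L:cutblob} one is a cut contained in the other, a blob, so by exchanging $A$ and $B$ we may assume $a'$ is the cut, $b'$ the blob, $a'\subseteq b'$, while $S_C=(a',C]$ (in its order, the common restriction of the $[A,C]$- and $[B,C]$-orders) has no least element. This configuration is the main obstacle: here the pretree axioms no longer suffice (a general complete pretree can carry three pairwise non-median points) and one must use the topology of $Z$. The plan is: first, using Lemma \ref{L:cutblob} again to forbid adjacent blobs, show the cuts lying in $S_C$ are cofinal downward -- with infimum $a'$ computed inside $[A,C]$ and infimum $b'$ computed inside $[B,C]$ -- since otherwise a blob would sit strictly between a branch endpoint and these cuts, forcing either two adjacent blobs or a least element of $S_C$, i.e.\ the nonexistent $c'$. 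Then, choosing a strictly decreasing sequence of cuts $F_1>F_2>\cdots$ in $S_C$ and applying the nested-region conclusion of Lemma \ref{L:contain} to the data $F_{i+1}\in(A,C)\cap\cA$, $F_i\in(F_{i+1},C)\cap\cA$, one obtains a strictly decreasing chain of open ``$A$-sides'' $U_1\supsetneq U_2\supsetneq\cdots$ of $Z$, each with $\bd U_i\subseteq F_i$ (the corollary following Lemma \ref{L:sep}), and, since no $F_i$ separates $A$ from $b'$ and $a'\subseteq b'$, with $a'\subseteq F_i\cup U_i$ for every $i$. Finally one plays this nested chain against the maximality of the blob $b'$ and against condition (3) on $\cA$ (applied to two cuts of $S_C$, whose intersection must not separate $Z$) to reach a contradiction. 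This yields $M\neq\emptyset$, so every triple has a median and $\cP$ is a median pretree.
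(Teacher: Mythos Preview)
Your setup and your ``all three of $a',b',c'$ exist'' case match the paper's Case~I exactly, including the triangle-colouring contradiction via Lemma~\ref{L:cutblob}.

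The divergence is in the remaining case. You are right that an abstract complete pretree need not be median, so something specific to $\cP$ is required; but you are wrong that this something must be a fresh topological argument. What you are missing is that the \emph{definition} of blob-betweenness already encodes enough: for $E\in\cB$ one has $E\in(X,Y)$ precisely when $[X,E)\cap(E,Y]=\emptyset$, and this criterion, together with Lemma~\ref{L:cutblob} (which you have already used), finishes the case in a few lines. In your labeling, with $a'\in\cA$, $b'\in\cB$, $a'\subseteq b'$, $(a',b')=\emptyset$ and $c'$ nonexistent: by axiom~(4) and $(a',b')=\emptyset$ we have $[A,b')\subseteq[A,a']\subseteq S_A$, while $(b',C]\subseteq[b',C]\subseteq[B,C]$; hence $[A,b')\cap(b',C]\subseteq S_A\cap[B,C]=M=\emptyset$, so by the blob-betweenness definition $b'\in(A,C)$ and thus $b'\in M$, a contradiction. (If instead $a'$ is the blob and $b'$ the cut, the symmetric argument gives $a'\in(B,C)$.) This is exactly the paper's Case~II, up to relabeling: the paper takes the blob among the two adjacent branch-endpoints and shows, via the defining criterion, that it lies in the third interval after all.

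By contrast, your proposed endgame---a decreasing chain of cuts $F_i\in S_C$, nested $A$-sides $U_i$, and then ``play this against maximality of $b'$ and condition~(3)''---is not actually carried out; the last sentence is a hope, not an argument. Something along those lines might be extractable from the machinery of Theorem~\ref{T:complete}, but that would amount to redoing a long topological argument where a two-line order-theoretic one suffices. As written, this step is a genuine gap.
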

 \begin{proof}
 Let $A,B,C \in \cP$.  Suppose by way of contradiction that $[A,B]\cap [B,C]\cap [A,C]= \emptyset$.   Clearly we may assume that $A,B,C$ are distinct.  Put the natural linear order on $[A,B]$ with $A<B$.  
 \begin{Notes} 
 \item There exist $ \hat A, \hat B, \hat C \in \cP$ with $[A,B] \cap [A,C]= [A,\hat A]$, $[B,A] \cap [B,C] = [B,\hat B]$ and $[C,A]\cap [C,B] = [C,\hat C]$.\hfill\break
 Notice that if $\hat A \in [ \hat B, B]$ then $\hat A \in ([A,B] \cap [A,C])\cap ([B,A] \cap [B,C] )=[A,B]\cap [B,C]\cap [A,C]$ contradicting our supposition, so $\hat A \not \in [\hat B,B]$.  Thus by Theorem \ref{T:BOW}(3) $A<\hat A<\hat B<B$.

By axiom (4) $[A,B] \subseteq [A,C] \cup [C,B]$ so $$[A,B] = ([A,C] \cap [A,B]) \cup([C,B]\cap [A,B]) =[A,\hat A] \cup [\hat B ,B]$$ Since $A<\hat A<\hat B <B$, it follows that $(\hat A, \hat B)=\emptyset$, and by symmetry, $\hat A,\hat B$ and $\hat C$ are pairwise adjacent. By Lemma \ref{L:cutblob}  each two element subset of $\{\hat A, \hat B,\hat C\}$ contains one blob and one cut which is of course impossible.

  \item Relabeling if need be, for $D$ the supremum of $[A, B] \cap [A,C]$ in the interval $[A,B]$, $D \not \in [A,C]$.  \hfill\break
  Notice that $[A,D)= [A, B] \cap [A,C]$.
  Let $E$ be the supremum of the set  $[A, B] \cap [A,C]$ in the interval $[A,C]$ with the natural linear order on $[A,C]$ yielding $A<C$. Notice that our two natural linear orders agree on $[A, B] \cap [A,C]$.
  If $E \in [A,B]$ then $D =E$ contradicting $D \not \in [A,C]$.  {\bf Thus} $E \not \in [A,B]$, and as above $[A,E) =[A, B] \cap [A,C]= [A,D)$
  
  Claim: $D$ and $E$ are adjacent. \hfill\break 
Suppose $F\in (D,E)$.  Then by axiom (4) $F \in [A,D) \cup [A, E) =[A,D) =[A,E)$.  Since $F$ is not an upper bound on $[A,D)$, then $F<G $ for some $G \in [A,D)$.
Thus $G \in (F,D)= (F,E)$, and  by axiom (3)  $F \not \in [G,D] \cup [G,E]\supset [D,E]$ contradicting $F \in (D,E)$, and the claim is proven. 

Thus $D$ and $E$ are adjacent, so one of them is a cut and the other a blob containing said cut.  Say $D$ is a cut and $E$ is a blob with $D \subseteq E$.    Notice that by axiom (4), $[B,E) \subseteq [B,D] \cup [D,E) = [B,D]$.  Since $[B,D] \cap [A,C] =\emptyset$ then $[B,D] \cap [A,E) = \emptyset$ so $[B,E) \cap (E, A] = \emptyset$ and by definition $E \in [A,B]$ a contradiction.

 \end{Notes}
 \end{proof}

 \begin{Thm}\label{T:complete} $\cP$ is complete.
 \end{Thm}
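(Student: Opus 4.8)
\emph{Sketch of the intended argument.} The plan is to prove, by hand, that every interval of \(\cP\) is Dedekind complete: given a bounded nonempty subset of an interval I will exhibit its supremum as an explicit closed subset of \(Z\), using nothing beyond connectedness of \(Z\) and the lemmas already proved.

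Fix an interval \([A,B]\), a natural linear order on it with \(A<B\), and a nonempty \(S\subseteq[A,B]\). Replacing \(S\) by \(D:=\bigcup_{s\in S}[A,s]\) — a convex, downward closed subset of \([A,B]\) with exactly the same upper bounds — it suffices to produce \(\sup D\). If \(D\) has a largest element we are done, so assume not; put \(U:=[A,B]\setminus D\), an upward closed set containing \(B\), so that the goal becomes \(\min U\). I will repeatedly use that \(D\cap\cA\) is cofinal in \(D\): given \(X\in D\), since \(D\) has no maximum there are two elements of \(D\) above \(X\), and either one of them lies in \(\cA\), or they are two distinct blobs and Lemma \ref{L:blob} furnishes a cut strictly between them, still inside the convex set \(D\). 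I will also fix, for each cut \(X\in(A,B)\cap\cA\), a separation \(Z\setminus X=U_X\sqcup V_X\) with \(A\setminus X\subseteq U_X\) and \(B\setminus X\subseteq V_X\) — such a separation exists by Lemma \ref{L:quas} — chosen carefully (via Lemma \ref{L:refine}, Lemma \ref{L:contain} and the corollary to Lemma \ref{L:sep}) so that \(V_X\) is as small as possible on the \(B\)-side and, for \(X<Y\) in \((A,B)\cap\cA\), the nestings \(U_X\subsetneq U_Y\) and \(U_X\cap V_Y=\emptyset\) hold. Note that \(A\setminus X\) and \(B\setminus X\) are always nonempty, since no cut can contain \(A\) or \(B\): for \(A,B\in\cA\) this is conditions (1) and (3), and a blob is never contained in a cut.

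Next I form the open sets \(O:=\bigcup\{U_X:X\in D\cap\cA\}\) and \(P:=\bigcup\{V_Y:Y\in U\cap\cA\}\) (reading \(U_A,V_B\) as empty), and set \(M:=Z\setminus(O\cup P)\), a closed set. The nestings make \(O\) and \(P\) disjoint; both are nonempty, since \(D\cap\cA\) is cofinal, hence nonempty, and likewise \(U\cap\cA\neq\emptyset\); and the defining inclusions of the one-sided sets give \(A\cap P=\emptyset=B\cap O\), so \(A\setminus M=A\cap O\) and \(B\setminus M=B\cap P\) are nonempty. Since \(Z\) is connected, the disjoint nonempty open sets \(O\) and \(P\) cannot cover it, so \(M\neq\emptyset\); and because \(Z\setminus M=O\sqcup P\) is a separation with \(A\setminus M\subseteq O\) and \(B\setminus M\subseteq P\), the set \(M\) separates \(A\) from \(B\) in \(Z\).

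What remains — and where I expect essentially all the difficulty to lie — is to show that \(M\) is inseparable, hence contained in a maximal inseparable set \(\hat M\in\cP\), and then that \(\hat M=\min U=\sup S\). For inseparability one supposes a cut \(R\in\cA\) separates two points \(p,q\) of \(M\) and argues by the position of \(R\). If \(R\) separates \(A\) from \(B\) then \(R\in(A,B)\), so \(R\in D\) or \(R\in U\); in the first case \(R\) is one of the cuts defining \(O\), so \(U_R\subseteq O\) and hence \(p,q\notin U_R\), and with \(V_R\) chosen minimal on the \(B\)-side one contradicts that \(R\) separates \(p\) and \(q\); the case \(R\in U\) is symmetric, through \(P\) and the \(A\)-side of \(R\). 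If \(R\) does not separate \(A\) from \(B\) — this also covers \(R\notin[A,B]\) and \(R\in\{A,B\}\) — then, applying cofinality of \(D\cap\cA\), the pretree axioms (3) and (4), Theorem \ref{T:BOW} and condition (3) to \(A\), \(B\), \(R\) and maximal inseparable sets containing \(p\) and \(q\), one again derives a contradiction; informally, a cut lying off the \(A\)-to-\(B\) axis is too far away to split a set that separates \(A\) from \(B\). Granting inseparability, \(\hat M\supseteq M\) still separates \(A\) from \(B\) by Lemma \ref{L:qinc}, so Lemma \ref{L:blob} and axiom (4) place \(\hat M\) inside \([A,B]\); a last bookkeeping step — comparing \(\hat M\) with an arbitrary \(s\in S\) and an arbitrary \(Y\in U\) according to which of \(U_Y,V_Y\) meets \(\hat M\setminus Y\), and again using cofinality of \(D\cap\cA\) — shows that \(\hat M\) is an upper bound of \(S\) and is below every element of \(U\); hence \(\hat M=\min U=\sup S\). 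The two real obstacles are thus the careful management of quasicomponents needed to pin down the one-sided sets \((U_X,V_X)\) with the stated nestings and minimality, and the off-axis subcase of the inseparability argument. Once \(\cP\) is complete, Theorem \ref{T: Median} shows it is a complete median pretree, completing the proof of the Main Theorem.
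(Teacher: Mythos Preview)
Your overall strategy---manufacture an inseparable closed subset of $Z$ sitting at the ``top'' of $D$, enlarge it to a maximal inseparable set in $\cP$, and identify that with the supremum---is the paper's strategy too. The paper, however, argues by contradiction and works one-sidedly: it forms $U=\bigcup_{C\in\cS\cap\cA}(U_C\cup C)$ and proves that $\bd U$ is inseparable. The crucial advantage of using a \emph{boundary} is that every open neighbourhood of a point of $\bd U$ meets both $U$ and its complement $V=\bigcap V_C$. When a cut $E$ allegedly separates two points $o,w\in\bd U$ via $(O,W)$, the paper can therefore extract four auxiliary points $\hat o,o'\in O$ and $\hat w,w'\in W$ with $\hat o,\hat w$ in some $U_C\cup C$ and $o',w'$ in every $V_D$; playing these against the nesting forces $\bd U$ into $C\cap D$ for two distinct cuts, contradicting condition~(3).

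Your two-sided set $M=Z\setminus(O\cup P)$ has no such neighbourhood property, and this is exactly where your on-axis inseparability step breaks. For $R\in D\cap\cA$ you correctly deduce $p,q\notin U_R$, hence $p,q\in V_R$; but ``$V_R$ chosen minimal on the $B$-side'' cannot mean what you need it to mean. In the generality of the paper $Z$ is merely connected, quasicomponents of $Z\setminus R$ need not be open, and there is no smallest clopen half containing $B\setminus R$ (Lemma~\ref{L:refine} only lets you intersect finitely many). So $p,q\in V_R$ is entirely compatible with $R$ separating $p$ from $q$: they may lie in two distinct quasicomponents both inside $V_R$. Passing to some $R'>R$ in $D\cap\cA$ via cofinality does not obviously help, because a separation of $Z\setminus R$ with $A\setminus R$ and $B\setminus R$ on the \emph{same} side need not interact with $(U_{R'},V_{R'})$ in any controlled way. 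The symmetric case $R\in\cU\cap\cA$ has the same defect. You flag ``careful management of quasicomponents'' as an obstacle; it is in fact the whole difficulty, and the paper's boundary trick is what resolves it.

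There is a second, smaller gap. Once $M$ is inseparable and $\hat M\supseteq M$ is maximal inseparable, the fact that $\hat M$ separates $A$ from $B$ does \emph{not} immediately place $\hat M$ in $[A,B]$ when $\hat M\in\cB$: that implication is part~(IV) of the Main Theorem, which is proved only after completeness. The paper handles the analogous step with a final case split (its blob $F\supseteq\bd U$ either lies in $(A,B)$ or not), in each case trapping $\bd U$ inside infinitely many distinct cuts and invoking condition~(3). Your direct argument would need a comparable analysis here.
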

 \begin{proof} Let $A,B \in \cP$ distinct   and $\cS \subset [A,B]$  nonempty.  By replacing $\cS$ with its convex hull, we may assume $\cS$ is convex.  Take the natural linear order on $[A,B]$ with $A<B$.  By way of contradiction, suppose that $\cS$ has no supremum in this linear order, then for any $C \in [A,B]$ either $(C,B) \cap \cS$ is infinite, or there are infinitely many $D \in (A,C)\cap \cA$ with $\cS \subseteq [A,D)$.

For each $C \in (A,B) \cap \cA$ choose a separation  $(U_C,V_C)$  of $Z\setminus C$ with $A \setminus C \subseteq U_C$ and $B \setminus C\subseteq V_C$.  Let $$U = \bigcup\limits_{C \in \cS \cap \cA}U_C\cup C\,.$$  
 By Lemma \ref{L:contain} and Theorem \ref{T:axiom 3} this is a nested union, that is if $C, \hat C \in   \cA \cap (A,B)$ with $C<\hat C$ then $ U_C \cup C \subsetneq U_{\hat C} \cup \hat C$ (and   $ V_{\hat C}  \subsetneq  V_C$).  Notice that even though the choices of $(U_C,V_C)$ are not unique, for any choices we will have this nesting, and it follows that $U$ is independent of these choices.
 We let $$V= \bigcap\limits_{C \in \cS\cap \cA} V_C\,. $$    Notice that since $Z = U_C \sqcup V_C \sqcup C$ for all $C \in \cS\cap \cA$ then $Z = U \sqcup V$. It follows that $V$ is also independent of these choices.
 
 Notice that there is $D \in(A,B) \cap \cA$ so that $\cS \subset [A,D)$ which implies by condition (3) on $\cA$ that  that $U \subseteq U_D$ and $B\setminus D \subseteq V_D$.  By Lemma \ref{L:sep}, $\bd U$ separates $Z$ (in particular $\bd U \neq \emptyset$).  By condition (3) on $\cA$, $\bd U \not\subseteq D\cap E$ for any distinct $D,E \in \cA\cap(A,B)$.

   We now show that $\bd U$ is inseparable.  Suppose not, then there are $o,w \in\bd  U$, $E \in \cA$ and $(O,W)$ a separation of $Z \setminus E$ with $o\in O$ and $w \in W$.  By definition of boundary point, there is $ \hat o \in O \cap (U_{C_1}\cup C_1)$ for some $C_1 \in \cS\cap \cA$ and $o' \in O \cap V_D$ for all $ D \in \cS\cap \cA$.  Similarly there is $\hat w \in W \cap (U_{C_2}\cup C_2)$ for some $C_2 \in \cS\cap \cA$ and $w' \in W \cap V_D$ for all $D \in \cS\cap \cA$.  By nesting, for $C > \max\{C_1,C_2\}$, $ \hat o, \hat w \in U_C\cup C$, and of course $o',w' \in V_C$.  Since there are infinitely many such $C$, we may assume that $C \neq E$. By moving $A$ or $B$ if needed, we may assume that $A,B,C,E$ all distinct. 
   
   Suppose that $E \in (A,B)$.  By symmetry, we may assume that $A \setminus E \subseteq O= U_E$ and $B \setminus E \subset W = V_E$.   If $E$ is an upper bound on $\cS$ then $\hat w \in U_C \cup C \subseteq O\cup E$ contradicting $\hat w \in W$. If $E$ is not an upper bound on $\cS$ then we may assume that $E <C$ and so $o' \in  V_C \subseteq E \cup W$ contradicting $o' \in O$.  Thus $E \not \in (A,B)$ and by symmetry, we may assume that $A\setminus E, B \setminus E \subseteq O$.  It follows from Theorem \ref{T:BOW}  that  $C \setminus E \subseteq O$ as well. Since $Z = \cup \cA$ we may choose $F \in \cA$ with $F\setminus E \subseteq W$.
   
    By pretree axiom (4), $C \in ( A, E) \cup (E,B)$.
   \begin{Notes}
   \item $C \in (A,E)$ \hfill\break
   By Lemma \ref{L:refine} we may assume that $E \setminus C \subset V_C$.    Applying Lemma \ref{L:contain} to $A,C,E,F$ we see that $U_C \cap W = \emptyset$.  Since $C\setminus E \subseteq O$, we see that $[U_C \cup C] \cap W = \emptyset$, contradicting $\hat w \in [U_C \cup C] \cap W $.  
  
   \item $C \in (E,B)$ \hfill\break
   By Lemma \ref{L:refine} we may assume that $E \setminus C \subset U_C$.  Applying Lemma \ref{L:contain} to $F,E,C,B$ we see that $W \cap V_C = \emptyset$.  Contradicting $w' \in V_C \cap W$.  
   \end{Notes}
   Thus have we shown that $\bd U$ is inseparable.

   Since $\bd U$ is inseparable there is $F \in \cB$ with $\bd U \subseteq F$. 
   \begin{Notes}
   \item $F \in (A,B)$. \hfill\break
   We first show that $F$ is an upper bound on $\cS$.  Suppose not, then there is $C \in \cS \cap \cA$ such that $F\setminus C \subset U_C$ by Lemma \ref{L:refine} and nesting.  It follows that $\bd U \subseteq F \subseteq U_C \cup C$.  Clearly $U_C \subseteq \Int U$ so $\bd U \subseteq C$.  By nesting and the same argument, $\bd U \subseteq D$ for all $D \in cS \cap\cA$ with $D\ge C$.  Since $\bd U$ separates this violates condition (3) on $\cA$ and we have a contradiction.   Thus $F$ is an upperbound on $\cS$.   
   
   Thus there infinitely many  $D \in (A,F) \cap \cA$ such that $\cS \subseteq [A,D)$.  By nesting $U_C \cup C \subseteq U_D \cup D$ for all $C \in \cS \cap \cA$, so $U$ is contained in the closed set $ U_D \cup D$ implying that $\bd U \subseteq U_D \cup D$.  By Lemma \ref{L:refine} we may assume that $F \setminus D \subseteq V_D $, so $\bd U \subseteq D \cup V_D$.  Thus $\bd U \subseteq [U_D \cup D] \cap [D \cup V_D] = D$.  Thus $\bd U$ is contained in infinitely many such $D \in (A,F)\cap \cA$ contradicting condition (3) on $\cA$.
   \item $F \not \in (A,B)$.  \hfill\break
   Then there is $G \in (A,F)\cap (F,B) \cap \cA$.
  Using Lemma \ref{L:refine} there is a separation $(X,Y)$ of $Z \setminus G$ such that $A\setminus G, B\setminus G \subset X$ and $F \setminus G \subseteq Y$.  Since $\bd U = \bd V \subseteq F \subseteq Y\cup G$.  Then $U \cap X = [\Int U] \cap X$ is open and similarly $V \cap X=[ \Int V ]\cap X$ is open.  Since $U \sqcup V = Z$, it follows that $X = [U\cap X] \sqcup [V \cap X]$.  Thus $( [U\cap X]\cup Y, [V\cap X])$ is a separation of $Z\setminus G$.   Since $A\setminus G \subseteq U\cap X$ and $B \setminus G \subset V \cap X$  then $G \in (A,B)$, and we may assume that $U_G =  [U\cap X]\cup Y$ and $V_G =V \cap X$.  In particular $F \subseteq U_G \cup G$.  
  
 {\bf Claim}: $G$ is an upper bound on $\cS$. \hfill\break
  Suppose not, then there are infinitely  $C \in \cS \cap \cA$ such that $C> G$, so $U_G \cup G \subsetneq U_C \cup C$.  Since $U_C \subseteq \Int U$,  and $\bd U \subseteq F \subseteq U_G \cup G \subseteq U_C\cup C$ it follows that $\bd U \subseteq C$ for all such $C$.  As before this violates condition (3) on $\cA$ and we have a contradiction.  
  
  Thus $G$ is an upper bound on $\cS$, and there are infinitely many $D \in (A, G)\cap \cA$ with $\cS \subseteq [A, D)$.  We now go back and change our choice of $(U_G, V_G)$ so that $U_G  = U\cup X$ and $V_G = [V\cap X] \cup Y$.  Notice that now $\bd U \subset V_G \cup G$.  Since $U \subseteq U_D \cup D$ which is closed then $\bd U\subseteq U_D \cup D$.  Thus $\bd U \subseteq [U_D \cup D]\cap [D \cup V_D]= D$ for infinitely many $D \in (A, G)\cap \cA$.  Once again, this violates condition (3) on $\cA$ and we have our final contradiction.
  \end{Notes}
 \end{proof}
 We now complete the proof of the main theorem which we state again for ease of reading
 \begin{Main} Let $Z$ be a connected topological space and $\cA$ a collection of closed subsets of $Z$ satisfying the following:
\begin{enumerate}
\item $\forall A \in \cA$,  $A$ separates $Z$.  
\item $\forall A,B \in \cA$, $A$ doesn't separate $B$.
\item For all distinct $A, B \in \cA$, $A \cap B$ doesn't separate $Z$.  
\end{enumerate}
Then there is a complete median pretree $\cP$ consisting of closed subsets $Z$ satisfying the following
\begin{enumerate}[(I)]
\item $\cA \subseteq \cP$ 
\item $Z = \cup \cP$ 
\item For $A, C \in \cP$ then $C$ doesn't separate $A$.
\item For $A,B,C \in \cP$ and  $A, B \not \subseteq C$, then 
\begin{enumerate}
\item if $C \in (A,B)$ then $C$ separates $A$ from $B$
\item if $C$ separates $A$ from $B$ then $D \in (A,B) $ for some $D \subseteq C$
\end{enumerate} 

\end{enumerate}
If $\cA \cap L$ is countable for each linearly ordered subset $L$ of  $\cP$, then $\cP$  is preseparable.
\end{Main}
\begin{proof}
The proof that $\cP$ is a pretree is in Theorems \ref{T:axiom 3} and \ref{T:axiom 4}. Theorem \ref{T:complete} shows that $\cP$ is complete after which Theorem \ref{T: Median} shows that it is median.  By Corolary \ref{C: preseparable}, $\cP$ is preseparable when the ``locally countable" condition is satisfied. 

We are given that the elements of $\cA$ are closed sets and as we argued in the definition the blobs are closed since thier complements are open.  Thus $\cP$ is a collection of closed sets.  Points are inseparable, therefor every point is contained in a maximal inseparable set which will be an element of $\cP$, thus $\cup \cP = Z$.  By definition $\cA \subseteq \cP$.  Thus we have proven (I) and (II). 

We now prove (III), that is: For $A,C \in \cP$,  $C$ doesn't separate $A$.   Clearly we may assume that $A \not \subseteq C$.  For $C\in \cA$, (III) is just Lemma \ref{L:quas}.  

 Now consider the case where $C \in \cB$.  Let $a, d \in A \setminus C$ be distinct.  Since $a,d \not \in C$ there are cuts separating each of them from $C$.  Using Theorem \ref{T:BOW}(3), and Lemma \ref{L:contain} there is a single 
 $H \in (A,C)\cap \cA$ and a separation $(O,W)$ of $Z \setminus H$ with $a,d \in O$ and $C \setminus H \subseteq W$.  
 
  The quasicomponents of $O$ are quasicomponents of $Z \setminus H$.  Thus $a$ and $d$ are in the same quasicomponent of $O$ by Lemma \ref{L:quas}. Since $C \subseteq H \cup W$,  $O \subseteq Z \setminus C$.  By Lemma \ref{L:qinc}, $a$ and $d$ are in the same quasicomponent of $Z \setminus C$.  This proves that $C$ doesn't separate $A$.
  
All that is left to prove is (IV).  The case where $C \in \cA$ follows from the definition of our betweenness relation and Lemma \ref{L:quas}, so we are left with the case where $C\in \cB$.
 
 We first show (a) that is, if $C \in (A,B)$ then $C$ separates $A$ from $B$.  Now  $A \neq B$ and so $(A,C) \cap (C,B) \cap\cA= \emptyset$. Since $A, B \not \subseteq C$, then by Lemma  \ref{L:cutblob}, $(A,C) \neq \emptyset \neq (C,B)$. Using Lemma \ref{L:blob} we see that  $(A,C)\cap \cA \neq \emptyset \neq (C,B)\cap \cA$.  For each $D \in (A,C) \cap \cA$  use Lemma \ref{L:refine} to  choose a separation $(U_D, V_D)$ of $Z\setminus D$ with $A \setminus D \subseteq U_D$ and $[B\setminus D] \cup [C\setminus D] \subseteq V_D$.  
 Define the nested union $$U = \bigcup\limits_{D \in (A,C) \cap \cA} U_D$$ so $U$ is an open set  and $U \cap A \neq \emptyset$.  Let $V = Z \setminus [U \cup C]$.

 Clearly $V \cap U = \emptyset$.  We must show that $V \cap B \neq \emptyset$ and  that $V$ is open in $Z$.  
 
 Let $E \in (C,B) \cap \cA$ and use Lemma \ref{L:refine} to choose a partition $(O,W)$ of $\Z \setminus E$ with $[A\setminus E]\cup [C\setminus E] \subseteq O$ and $B \setminus E \subseteq W$.  By Lemma \ref{L:contain} for any $D \in (A,C) \cap \cA$, $W \subseteq V_D$. Thus $W \subseteq V$.   
 
 We now show that $V$ is open. 
 \begin{Notes}
 \item $\exists D \in (A,C)$ with $(D,C) = \emptyset$.\hfill\break
 By Lemma \ref{L:cutblob} $D\in \cA$ and $D \subseteq C$.  It follows from nesting that $U = U_D$, which implies that $V= V_D\setminus C$ which is open since $C$ is closed.
 \item $\forall D \in (A,C),\, (D,C) \neq \emptyset$\hfill\break
   Let $g \in V$.  The point $g$ is contained in a maximal inseparable set $G$ (either a cut or a blob) so $g \in G \in \cP$ with  $G\neq C$.   Since  $g \not \in C$ there is $F \in \cA$ which separates $g$ from $C$  implying $F \in (C,G)$.   
\begin{Subcase}
 \item  $C \in (A,G)$. \hfill\break
 Since $F \in(C,G) \subseteq (A,G)$
  Now by Lemma \ref{L:refine} there is a partition $(O,W)$ of $Z \setminus F$ with $G\setminus F \subseteq W$ and $[C\setminus F ]\cup [A\setminus F] \subseteq O$.
 Let $D \in (A,C) \cap \cA$.  Since $ F\not \in (A,D)$ then $D \setminus F \subseteq O$. Applying Lemma \ref{L:contain}  to $A, D, F,G$ we see that $U_D \cap W = \emptyset$.  It follows that $U\cap W= \emptyset$.  Since $C \cap W = \emptyset$ then $g \in W \subseteq V$.   Thus $V$ is open.  
 \item $C \not \in (A,G)$\hfill\break 
 By definition, there is $E \in (A,C)\cap(C,G) \cap \cA$.   Now by Lemma \ref{L:refine} there is a partition $(O,W)$ of $Z \setminus E$ with
 $[A\setminus E] \cup [G \setminus E] \subseteq O$ and $C \setminus E \subseteq W$. Since $g \not \in C$ we may choose $E$ with $g \not \in E$, so $g \in O$.  
 
   By our case and Lemma \ref{L:blob}, there is $D \in (E,C) \cap \cA$.  By Lemma \ref{L:contain} $O \subseteq U_D$. Thus $g \in O \subseteq U_D \subseteq U$ a contradiction.  
 \end{Subcase}
 \end{Notes}
 Thus $V$ is open and and so $C$ separates $A$ from $B$.  
 This completes the proof of (a).

 We now show (b), that if $C$ separates $A$ from $B$ then $D \in (A,B)$ for some $D \subseteq C$.  We may of course assume that $C \not \in (A,B)$. There is a separation $(U,V)$ of $Z\setminus C$ with $$A \cap U \neq \emptyset \neq B \cap V\,.$$  By (III)  $A \setminus C \subseteq U$ and $B \setminus C \subseteq V$.  
 Since $C$ is a blob and $A, B \not \subseteq C$, if $C \not \in (A,B)$ then by definition there is $D \in (A,C) \cap (B,C)\cap \cA$.  
 
 First consider the case where $D \subsetneq C$.  In this case $A \setminus C = A \setminus D \neq \emptyset$ and $B \setminus C = B \setminus D\neq\emptyset$.  
 Using Lemma \ref{L:refine}, there is a separation $(O,W)$ of $Z\setminus D$ with $A\setminus D, B\setminus D \subseteq O$ and $C\setminus D \subseteq W$.  Notice that $O \cap U$, $O \cap V$ and $W$ are disjoint  open subsets of $Z$.  They are nonempty since $A \setminus D \subseteq O \cap U$, $B \setminus D \subseteq O \cap V$ and $C\setminus D \subseteq W$.
 
 Since $C \cap O=\emptyset$,  then $O \subseteq U\cup V$, and so $O = (O \cap U)\cup (O\cap V)$.  Thus $Z \setminus D = (O\cap U) \sqcup (O\cap V) \sqcup W$.  Now we have that $(O \cap U, [O\cap V]\cup W)$ forms a separation of $Z \setminus D$ and so $D \in (A,B)$ by definition.  
 
 Lastly consider the case where $D \not \subseteq C$.  By Lemma \ref{L:cutblob} and Lemma \ref{L:blob} there is an $ E\in (D,C) \cap \cA$.
It follows using the natural linear oderings that $E \not \in [A,D]$ and $E \not \in [B,D]$.  Thus by pretree Axiom 4, $E \not \in (A,B)$.  By Corollary \ref{C:contain} $E \in (A,C)$, and so there is a separation $(U,V) $ of $Z\setminus E$ with $A \setminus E \subseteq U$ and $C\setminus E \subseteq V$.  Since $E\not\in (A,B)$, $B \setminus E \subseteq U$, and in fact $A \setminus E$ and $B \setminus E$ are in the same quasicomponent $Q$ of $Z \setminus E$.  Notice that $Q$ is also a quasicomponent of $U$.  However $C\setminus E \subseteq V$ and so $C \cap U = \emptyset$ and $U \subseteq Z \setminus C$.  By Lemma \ref{L:qinc}, $Q$ is contained in a single  quasicomponent of $Z \setminus C$.  Clearly $A \setminus C \subseteq A \setminus E$ and $B \setminus C \subseteq B \setminus E$.  Thus $A \setminus C, B \setminus C \subseteq Q$ which is contained in a single quasicomponent of $Z \setminus C$.  This contradicts $C$ separating $A$ from $B$, and (b) is proven.

\end{proof}

\end{document}
\]
\]
\end{align*}
\]
\]

\end{document}